\newtheorem{theorem}{Theorem}[section]
\newtheorem{proposition}[theorem]{Proposition}
\newtheorem{lemma}[theorem]{Lemma}
\newtheorem{definition}[theorem]{Definition}
\newtheorem{remark}[theorem]{Remark}
\numberwithin{equation}{section}
\newcommand{\R}{{\mathbb R}}
\newcommand{\Z}{{\mathbb Z}}
\newcommand{\C}{{\mathbb C}}
\newcommand{\CA}{{\mathcal{A}}}
\newcommand{\CH}{{\mathcal{H}}}
\newcommand{\CL}{{\mathcal{L}}}
\newcommand{\CN}{{\mathcal{N}}}
\newcommand{\CO}{{\mathcal{O}}}
\newcommand{\CR}{{\mathcal{R}}}
\newcommand{\CV}{{\mathcal{V}}}
\newcommand{\CW}{{\mathcal{W}}}
\newcommand{\CSK}{{\mathcal{S K}}}
\newcommand{\CSH}{{\mathcal{S H}}}
\newcommand{\g}{{\mathfrak g}}
\newcommand{\gl}{{\mathfrak g \mathfrak l}}
\newcommand{\so}{{\mathfrak s \mathfrak o
}}
\begin{document}

\author{M.~Vergne}
\address{Mich\`ele Vergne: Universit\'e Paris 7 Diderot, Institut Math\'ematique de Jussieu, Sophie Germain, case 75205, Paris Cedex 13}
\email{michele.vergne@imj-prg.fr}
\title{Bouquets revisited and equivariant elliptic cohomology}

\date{}

\begin{abstract}

Let $M$  be an even dimensional  spin manifold  with  action of a compact Lie group $G$.
We review  the construction of bouquets of analytic equivariant cohomology classes and of $K$-integration.
 Given an elliptic curve $E$,
we introduce, as in Grojnowski,  elliptic bouquets of germs of holomorphic  equivariant cohomology classes on $M$.
Following Bott-Taubes and Rosu, we
show that integration of an elliptic bouquet is well defined.
 In particular, this imply Witten's rigidity theorem.
  Our systematic use of the Chern-Weil construction of
   equivariant classes allows us to
   reduce proofs to algebraic identities.
   \end{abstract}

\maketitle

\section{Introduction}

Let $G$ be a real compact connected Lie group acting on a compact manifold  $M$. In \cite{duflo-vergne}, we defined
 bouquets $(\alpha_g)_{g\in G}$ of equivariant forms on $M$, satisfying some "descent properties".
 Here we view a bouquet as a section of a sheaf $\CSK_G(M)$ on $G$.
 The bouquet construction  in \cite{duflo-vergne} can be seen as an equivariant De Rham model for
 equivariant $K$-theory.

Assume that $M$ is even dimensional and has a $G$-invariant spin structure.
  We defined  in \cite{duflo-vergne} the direct image of a bouquet,
  in particular integration.
   At any $g\in G$, the integration of $\alpha_g$ is a germ of an analytic function
   at $g$, and the integration of a global section is an analytic function on $G$.
If $\CV\to M$ is a $G$-equivariant complex vector bundle, then $\CV$ gives rise to the bouquet of Chern characters ${\bf bch}(\CV)=
({\rm Ch}(g,\CV))_{g\in G}$, a global section of this sheaf.
The delocalized index formula of Berline-Vergne \cite{ber-ver85}
  shows  that the integration  of this bouquet is
  the equivariant index $\Theta(g)$ of the Dirac operator $D$ twisted by $\CV$
  and this property dictates the notion of integration of bouquets.
  Consider to simplify the case where $G=S^1=\{u\in \C; |u|=1\}$.
  Then $\Theta(u)$ is the restriction to $S^1$ of a holomorphic function on $\C^*$.
   However, the  Atiyah-Bott-Segal fixed point formula for $\Theta(u)$ seems to have poles on $|u|=1$,
   in particular for $u=1$. One of our motivations to introduce bouquets
   and integration was to give a  valid formula for $\Theta(u)$ as a germ  of analytic function at any point $u\in S^1$. This technic of descent, already present in Harish-Chandra,
   is essential in the case of infinite dimensional representations
   to define a formula for characters near any point $g\in G$.

Assume that $M$ is an even dimensional   spin manifold.
Witten \cite{witten} has introduced   a formal series $W_q=\sum_{n\geq 0} q^{n/2}\CV_n$ of complex tensor vector bundles on $M$. The (ordinary) index $\Theta$ of the Dirac operator on $M$ twisted by $W_q$ is
a formal series $\Theta(q)=\sum_{n\geq 0} c_n q^{n/2}$ in $q^{1/2}$.
Assume furthermore that $M$ is provided with an action of
$G=S^1=\{u;|u|=1\}$ preserving the spin structure.
Then the equivariant index $\Theta(q,u)=\sum_{n\geq 0} c_n(u) q^{n/2}$
is now a formal series of functions of $u\in \C^*$.
 Witten  conjectured that in fact $\Theta(q,u)$
 does not depend of $u$.
 This is
 the famous  Witten rigidity conjecture, soon proved by several authors notably  Bott-Taubes \cite{bott-taubes}. Let us outline their approach.
Fix $q=e^{2i\pi \tau}$ with $\tau$ in the upper half plane.
Let $L_\tau=(\Z+\Z \tau)$ be the corresponding  lattice in $\C$.
  We identify the Lie algebra $\g$ of $S^1$ with $\R$,  with exponential map $\g\to S^1$ given by
  $x\mapsto e^{2i\pi x}$ and
  $\g_\C$ with $\C$ with exponential map $\g_\C \to G_\C=\C^*$ given by
  $z\mapsto e^{2i\pi z}$.
  If $z\in \C$, then it is easy to see that the series
  $z\mapsto \Theta(q,e^{2i\pi z})$
   defines a meromorphic function on  the elliptic curve
  $E_{\tau}=\C/L_{\tau}$,
   with possible poles  on a finite set of "special points" of finite order.
   The transfer formula of Bott-Taubes shows that this
   function in fact has no poles. Since $E_{\tau}$ is compact, $z\mapsto \Theta(q,e^{2i\pi z})$ does not depend of $z$.

When $G=S^1$ is acting on  $M$,
 Ioanid Rosu \cite{rosu}  considered Grojnowski
  sheaf \cite {groj} of  equivariant elliptic cohomology ${\mathcal Ell}_G(M,\tau)$.  A section of this sheaf
 will be called an "elliptic bouquet".
When $M$ is a point,  then ${\mathcal Ell}_G(M,\tau)$ is the sheaf $\CO(E_\tau)$.
Inspired by Bott-Taubes proof, Rosu defined the $\tau$-integration of  an elliptic bouquet    using a generalized transfer formula,
which insures consistency of integration at different points.
If the section is  defined over an open set $U\subset E_\tau$, its integral is
an holomorphic function   on   $U$.
The $\tau$-integration of a global section is a holomorphic function defined everywhere
on $E_{\tau}$, thus is a constant $c_\tau$.
By considering the global section ${\bf w}=1$, this gives a proof of the rigidity theorem of Witten for the series $W_q$.
Although Rosu's proof (that we basically reproduce here)
might be considered as less elegant than Liu's approach \cite{Liu},
it is (in principle) a finer
result since it allows local integration of any elliptic
bouquet on a {\bf fixed} elliptic curve, and not only of the global bouquet ${\bf w}=1$.
Rosu defined also direct images of elliptic bouquets for fibrations $\pi:M\to B$ .
In particular, applying it again to ${\bf w}=1$,
it  implies the generalization of Witten rigidity theorem
for families at the level of Chern  character proved by Liu and Ma \cite{LiuMa}.
However the  result of rigidity in $K$-theory  \cite{LiuMaZhang} is
outside the reach of equivariant elliptic cohomology technics.

The purpose of this note is to review, and somewhat simplify,
  Rosu's construction of elliptic bouquets and integration.

 We give models of
sheafs for equivariant cohomology,  equivariant $K$-theory, and equivariant elliptic
cohomology.
We have restricted ourselves to define push-forward $M\to {\rm point}$, that is integration for these different theories. The same technic would work for push forward $M\to B$.
 We  make a systematic use of the Chern Weil map in equivariant cohomology,
in particular integration of  our equivariant classes gives germs of holomorphic functions on $E_\tau$ by construction.

We start by reviewing integration on the sheaf $\CSK_G(M)$ since the proof
of the consistency of integration of elliptic bouquets is very similar. Using the BV localization formula
\cite{BV82},
the only delicate point is to check   the compatibility of various choices of orientations and square roots of invariant  functions (such as  Pfaffians).
Similarly, in the elliptic case, our main theorem is Theorem  \ref{th:MainformulaElliptic}
which is an equality of invariant
functions (depending of an element $\gamma\in E_\tau$, and inspired by   Witten series $W_q$) on  $\so(N)$ and is purely algebraic.
Using again BV localization formula, it implies Bott-Taubes-Rosu transfer
formula. Indeed,  with $N=\dim M$,  it allows to glue the integration of germs of an elliptic bouquet on $M$ at different points
$\gamma$ of $E_\tau$.

%=============================================================================
\section*{Acknowledgements}
%=============================================================================

Discussions with Eric Vasserot were useful to understand the similarities
of the different  cohomological theories, and this is the point that I want to emphasize in this text. I am thankful to Xiaonan Ma,
 Daniel Berwick-Evans, and   Michel Duflo for providing references and  correcting earlier versions.
 The possible remaining mistakes are mine.

%=============================================================================

\section{Germs of equivariant forms and bouquets}
To simplify the exposition, in this article,  we assume  that $G$ is a torus, and $M$ compact oriented.

We recall in this section the definition of bouquets of equivariant forms for analytic coefficients.
Various versions of bouquets in more general situations are given in \cite{duflo-vergne}\,,\cite {vergneecm}.

For characteristic classes, we adopt the conventions of \cite{BGV}.
In particular the annoying's $2\pi$ occur in integration formulae and not in
the definition of characteristic classes associated to  connections.

{\bf Let $G$ be a compact torus  with Lie algebra $\g$}.
Let $M$ be a $G$-manifold. We denote by $S_M=\frac{d}{d\epsilon}
\exp(-\epsilon S)\cdot m|_{\epsilon=0}$ the vector field on $M$
produced by the infinitesimal action of $-S\in\g$.
We denote by $M^G$ the set of fixed points of the action of $G$ in $M$.

 If $s\in G$, we denote by $M(s)$ the fixed point of $s$ in $M$.
 If $S\in \g$, we denote by $M(S)$ the set of zeroes of the vector field $S_M$.
 Let $S\in \g$, $s\in G$, then if
 $Y\in \g$ is sufficiently small,
$M(S+Y)=M(S)\cap M(Y)$, and $M(s \exp Y)=M(s)\cap M(Y)$.
Since $G$ is abelian, $M(s)$ and $M(S)$ are $G$-invariant.

  Let $\CA(M)$ be the space of complex valued differential forms on $M$.
  We consider it as a $\Z/2\Z$ graded space.
  Define the space $\CA_{G}(\g,M)$ of $G$-equivariant forms
 as the space of $G$-equivariant maps $X\mapsto \alpha(X)$ from $\g$ to $\CA(M)$
which depends analytically  of $X\in \g$. In our present abelian case, these are maps $\alpha: \g\to \CA(M)^G$.
 Let $d$ be the de Rham differential, and $\iota(X_M)$ be the contraction by the vector field $X_M$.
The odd operator $D=d-\iota(X_M)$ is well defined on $\CA_G(\g,M)$ and $D^2=0$.
We define $\CH_G(\g,M)$ as the $\Z/2\Z$ graded cohomology space of $D$.
We often write an element $\alpha\in \CH_G(\g,M)$  as
$\alpha(X)$, meaning a representant $X\mapsto \alpha(X)$
 such that
$(d-\iota(X_M))\alpha(X)=0$, and we will say  that $\alpha$ is a function of $X\in \g$.
There is a natural map $\CH_G(\g,M)\to H^*(M)$ given by
$\alpha\mapsto
\alpha(0)$.

Let $S\in \g$, we have a map
$\tau_S:\CA_G(\g,M)\to \CA_G(\g,M(S))$
defined by $\tau_S\alpha(X)=\alpha(S+X)|_{M(S)}$.
Since $S_M$ vanishes on $M(S)$, this operator commute with the differentials and induce an application $\tau_S: \CH_G(\g,M)\to \CH_G(\g,M(S)).$

  We choose a scalar product on $\g$, and, for $a>0$, let
    $B_a\subset \g$ be the open ball of radius $a$. Define the space
    $\CA_G(B_a,M)$  as the space  of $G$-equivariant forms $X\mapsto \alpha(X)$ defined on $B_a$ and  $\CH_G(B_a,M)$ as the $\Z/2\Z$ graded cohomology space of $D$ on $\CA_G(B_a,M)$.
If $b\leq a$, there is a natural restriction map $\CH_G(B_a,M)\to \CH_G(B_b,M)$.
This allows us to define $$H_{[G]}(M)=\lim_{a\to 0} \CH_G(B_a,M),$$
the space of germs at $0$ of equivariant cohomology classes (with analytic coefficients).

If $\alpha(0)\in H^*(M)$
is invertible, then $\alpha^{-1}(X):=\frac{1}{\alpha(X)}$
is in $H_{[G]}(M)$.

{\bf If $\alpha(X)\in H_{[G]}(M)$, and $M$ is oriented, then
$X\mapsto \int_M \alpha(X)$ is a germ at $0$ of an analytic function
on $\g$.}

\bigskip

\bigskip
We now define the sheaf $\CSH_{\g}(M)$ on $\g$ for equivariant cohomology.
If $S\in \g$, the stalk is the
space $H_{[G]}(M(S))$ of germs of $G$-equivariant classes on $M(S)$.
\begin{definition}\label{def:sheaf}
A collection ${\bf b}=(b_S)_{S\in U}$ of elements
$b_S\in H_{[G]}(M(S))$ is a section of $\CSH_{\g}(M)$ over $U$
if and only if, given $S\in U$, there exists $a>0$  sufficiently small
such that

1) $S+B_a\subset U$

2) $M(S+Y)=M(S)\cap M(Y)$ for all $Y\in B_a$.

3) There exists a representative of $b_S$ defined on $B_a$
 such that  the equality $$b_S(Y+X)|_{M(S+Y)}=b_{S+Y}(X)$$
holds  as elements  of $H_{[G]}(M(S+Y))$ for every $Y\in B_a$.

\end{definition}

We write informally:
$$b_S(Y+X)|_{M(S+Y)}=b_{S+Y}(X)$$
for $S \in U$,  $Y$ sufficiently small.
This is an equality  of germs of  functions of $X$, that is an equality in $H_{[G]}(M(S+Y))$.

If $M$ is a point, the sheaf $\CSH_{\g}(M)$ is the sheaf of analytic functions on $\g$.

\bigskip

Similarly we define a sheaf $\CSK_G(M)$ on $G$ for equivariant $K$-theory.
If $g\in G$, the stalk at $g$ is the space $H_{[G]}(M(g))$
of germs of $G$-equivariant classes on $M(g)$.
A collection ${\bf \alpha}=(\alpha_g)_{g\in U}$  of elements   $\alpha_g\in H_{[G]}(M(g))$ is a section of $\CSK_G(M)$ over $U$ if and only if (with formal definition as  in Definition \ref{def:sheaf})
we have $$\alpha_g(Y+ X)|_{M(g \exp Y)}=\alpha_{g\exp Y}(X)$$
for $g \in U$, $Y$ sufficiently small so that $g\exp Y\in U$,
and $M(g\exp Y)=M(g)\cap M(Y)$.
This is an equality in $H_{[G]}(M(g\exp Y))$.
Such a collection $(\alpha_g)$ will be called a bouquet of equivariant forms.

If $M$ is a point, $\CSK_G(M)$ is the sheaf of analytic functions on $G$.

\bigskip

Let us recall the Chern Weil construction of  equivariant characteristic classes.

We will say that a vector bundle $\CW\to M$ has typical fiber   a (real or complex)  vector space $W$ and structure group
$H$ if locally
$\CW=U\times W$ with transition functions $g_{ij}$ over $U_i\cap U_j$
valued in a group $H$ provided with an homomorphism
 $H\to GL(W)$. One of the relevant example in this note
 is   a vector bundle $\CW$ with spin structure.
So here $N$ is an oriented even dimensional Euclidean space and
$H={\rm Spin}(N)$, a double  cover of  $SO(N)$.

If $s\in GL(W)$, we denote by $GL(W,s)$ the group of transformation of $W$ commuting
with $s$, with Lie algebra $\gl(W,s)$.
 If $\CW$ is a $G$-equivariant vector bundle over a connected manifold $M$, and $g\in G$ acts trivially on $M$, then  we can choose local trivialization $U\times W$ so that the vertical action of $g$ on  $ W$ is  constant (since $G$ is compact), so the structure group $H$ of $\CW$  can be reduced to  $GL(W,g)$, where $GL(W,g)$ is the subgroup of transformations of $W$ commuting with $g$.

Let $\CN\to M$ be a $G$-equivariant real vector bundle of rank $r$.
Choosing a $G$-invariant connection $\nabla$,
one defines the equivariant curvature ${\mathcal R}(X)$ of $\CN$
(see  \cite{BV82}, see also \cite{BGV}, Chapter 7 ). Then, for $X\in \g$,
${\mathcal R}(X)=\mu(X)+\Omega$ is a
${\rm End}(\CN)$ valued differential form on $M$.
Here $\Omega=\nabla^2$ is the usual curvature  of $\nabla$ (a two-form valued in ${\rm End}(\CN))$,
and $\mu(X)$ a function on $M$ with value in ${\rm End}(\CN)$.
 If $S_M$ vanishes on $M$, we will repeatedly use the
 property
 \begin{equation}\label{eq:RS}
   {\mathcal R}(S+X)=\CL(S)+{\mathcal R}(X)
   \end{equation}
    where $\CL(S)$ denotes the endomorphism of $\CN$ produced by the action of $S$ on the fibers of $\CN$.

If $R\mapsto \Phi(R)$ is a $GL(r)$ invariant polynomial  function on $\gl(r)$,
the Chern Weil homomorphism associate to the polynomial function $\Phi$ an equivariant  characteristic class denoted by
${\rm cw}(\Phi)(X)$  on $M$. This is the class of the equivariant closed form
$X\mapsto \Phi({\mathcal R}(X))$, that is we replace the scalar valued matrix $R$ by $\CR(X)$ which is a matrix with coefficients even differential forms on $M$.

Let $\Phi$ be a germ (at $0$) of a $GL(r)$ invariant analytic function on
$\gl(r)$. Since the dependance of $\mu(X)$ in $X$ in linear and $\Omega$ nilpotent,
(and $M$ compact), it is easy to see that
 ${\rm cw}(\Phi)(X)$ is  in $H_{[G]}(M)$, that is, depends  analytically  of $X$ near $X=0$.
  Similarly let $H\to GL(W)$ be a Lie  group with Lie algebra $\frak h$.
If $\Phi$ is a germ (at $0$) of  a $H$-invariant analytic function on ${\frak h}$,
and $\CW\to M$ a $G$-equivariant vector bundle with typical fiber $W$ and structure group $H$,
 then ${\rm cw}(\Phi)(X)$ is  in $H_{[G]}(M)$.

Let  $N$ be an  Euclidean vector  space.
The function $R\mapsto \det_N(R)$ is a $SO(N)$ invariant polynomial on $\so(N)$ (identically $0$, unless $\dim N$ is even).
 An orientation $o_N$ on $N$ determines a square root
 $\det_{N,o_N}^{1/2}(R)={\rm Pfaff}(R)$.
 Conversely, if $S\in \so(N)$ is invertible,
 $S$ determines an orientation $o_S$ on $N$ such that  $\det_{N,o_S}^{1/2}(S)>0$.

Let $N=\R e_1\oplus \R e_2$, with orthonormal basis $e_1,e_2$
and  orientation given by $e_1\wedge e_2$.
Let $R\in \so(N)$ given by  $R e_1=\theta  e_2, Re_2=-\theta e_1$,
then $\det_{N,o_N}^{1/2}(R)=\theta$.

\bigskip

Let $M$ be a   $G$ manifold and $\CN\to M$ be a $G$-equivariant oriented even dimensional real vector bundle with typical fiber $N$ and orientation $o_\CN$.
Choose a $G$ invariant Euclidean structure on $\CN$.
The Euler class ${\rm Eul}(\CN,o_\CN)(X)$ of the bundle $\CN\to M$ is the image by the Chern Weil homomorphism of the function $R\mapsto \det_{N,o_N}^{1/2}(R)$.
So  ${\rm Eul}(\CN,o_\CN)(X)=\det_{\CN,o_\CN}^{1/2}({\mathcal R}(X))$ where ${\mathcal R}(X)$ is the equivariant curvature of an invariant Euclidean connection.
Its equivariant cohomology class does not depend of the choice of Euclidean structure on $\CN$ nor of a choice of Euclidean connection.
 Let $S\in \g$ such that $S_M=0$.
 Then ${\rm Eul}(\CN,o_\CN)(S+X)=\det_{\CN,o_\CN}^{1/2}(\CL(S)+{\mathcal R}(X))$.
  If the transformation $\CL(S)$ produced by $S$ on the fibers of $\CN$  is invertible,
the equivariant cohomology class $X\mapsto {\rm Eul}(\CN,o_\CN)(S+X)$ is invertible at $X=0$.
Let us  recall the BV localization formula
\cite{BV82}.
If $M$ is a compact oriented manifold (not necessarily connected),
for $S\in \g$,  the normal bundle $\CN=TM/TM(S)$ of $M(S)$ in $M$
is an oriented even dimensional vector bundle
with orientation $o_S$ associated to the invertible action of $S$ on fibers.
The manifold $M(S)$ might not be connected. Each connected component $C_S$
of $M(S)$ is contained in a connected component $C$ of $M$ and $\dim(C)-\dim(C_S)$
is even valued. This provides a locally constant function  $(\dim M -\dim M(S))/2$ on $M(S)$.
The manifold $M(S)$ can be oriented by the quotient orientation of $M$ and $(\CN,o_S)$
(here and later, since $\CN$ is even dimensional, we do not care on order
defining quotient orientations).

If $o_M$ is the orientation of $M$, and $o_{M(S)}$ an orientation of $M(S)$,
we denote by $o_M/o_{M(S)}$ the corresponding orientation of $TM/TM(S)$.
If $\alpha$ is a germ of equivariant cohomology class at $0$,
then $\int_M\alpha(X)$ is a germ of analytic function at $X=0$.
 Furthermore,
$$\int_M\alpha(S+X)=\int_{M(S)}
(-2\pi)^{(\dim M-\dim M(S))/2} \frac{\alpha(S+X)}
{{\rm Eul}(TM/TM(S),o_M/o_{M(S)})(S+X)}$$
for $X$ small enough. Integrations on $M$, $M(S)$ are performed with the orientations $o_M$, $o_{M(S)}$, respectively.

If $M$ is compact oriented,
 and $\alpha$ is an equivariant closed class with polynomial coefficients, then
 $X\mapsto \int_M \alpha(X)$ is a polynomial function on $\g$ which depends only of the cohomology class of $\alpha$.
{\bf We extend this integration to sections of the sheaf $\CSH(\g,M)$}: let  $\alpha=(\alpha_S)_{S\in U}$ be a section
 of the sheaf $\CSH_{\g}(M)$ over an open set $U\subset \g$.
 For each $S\in U$, we define a germ $I_S$ of analytic function at $S$ by
 $$I_S(S+X)=\int_{M(S)} (-2\pi)^{-\dim M(S)/2}
 \frac{\alpha_S(X)}{{\rm Eul}(TM/TM(S),o_{M}/o_{M(S)})(S+X)}$$
 for $X$ small enough.
  Here and later, $M(S)$ might not be connected, so $\dim M(S)$
 is a locally constant function on $M(S)$.
 The orientation on $M(S)$ is deduced from the orientation on $M$ and the orientation $o_S$ of $TM/TM(S)$.

So $I_S$ is a germ in $S\in U$ of an analytic function.
By definition of the sheaf $\CSH_{\g}(M)$ and the BV localization formula,
we see that there exists a function $I$ on $U$ such that
 $I_S(S+X)=I(S+X)$. Indeed, for $Y$
small, the BV localization formula implies
$I_S(S+(Y+X))=I_{S+Y}((S+Y)+X)$, when $X$ is small enough.

\bigskip

To define $K$-integration on $\CSK_G(M)$, we need the equivariant $\hat A$ class.
So from now on, $M$ will be an even dimensional oriented spin manifold.

Let  $V$ be a real vector space. Let $R\in \gl(V)$.
Then $$j(V)(R)=\det_V(\frac{e^{R/2}-e^{-R/2}}{R})$$ is a $GL(V)$ invariant function,
 such that $j(V)(0)=1$.
So $j^{-1/2}(V)$   defines a germ  of analytic function on $\gl(V)$,
with $j^{-1/2}(V)(0)=1$.
When $R$ is trace less, $j(V)(R)$ is also equal to
$\det_V(\frac{e^{R}-1}{R}).$

\begin{definition}
If $\CV$ is a real vector bundle on $M$ with typical fiber $V$,
 the equivariant  $\hat A$ class on $\CV$ is denoted by $\hat A(\CV)(X)$
 and is the image by the Chern Weil map of $j^{-1/2}(V)$.
 If $\CV=TM$ is the tangent bundle, we denote this equivariant class by $\hat A(M)(X)$.
\end{definition}
So $X\mapsto \hat A(\CV)(X)$ is an element of $H_{[G]}(M)$.

 The $K$-theoretical integration of a bouquet ${\bf b}=(b_g)_{g\in U}$, defined on a small neighborhood $U$ of $1\in G$,
 is the germ at $1\in G$ of the analytic function  $\theta$ on $G$ such that for $X\in \g$, small enough,
 $$\theta(\exp X)=\int_M (-2i\pi)^{-(\dim M)/2} b_1(X) {\hat A}(M)(X).$$

This formula suggests the $K$-integration formula everywhere.
However, we need that $M(g)$ is oriented for any $g\in G$.
When $M$ has a $G$ invariant spin structure, this is the case.
In the next section, we recall some facts on the spinor group,
 in particular we explain why $M(g)$ is oriented.

\section{The spinor group and some invariant functions}\label{sec:spinors}
We follow the notations of    \cite{BGV}, chapter 6.

Let $N$ be an even  dimensional Euclidean  vector space.
We denote by ${\rm Clifford}(N)$ the Clifford algebra of $N$.
When $N=\{0\}$, then ${\rm Clifford}(N)=\C$.
If $N=N_0\oplus N_1$ is the direct sum of two Euclidean spaces, then
${\rm Clifford}(N)$ is the tensor product of ${\rm Clifford}(N_0)$ and ${\rm Clifford}(N_1)$.
We consider ${\rm Spin}(N)\subset {\rm Clifford}(N)$.
This is a connected compact Lie group, except when $N=\{0\}$, where ${\rm Spin}(N)=\{\pm 1\}$.
In the subsequent definitions, we avoid the case where $N=\{0\}$, where
 definitions have to be adapted!.

 There is a symbol map isomorphism $\sigma: {\rm Clifford}(N)\to \Lambda N$.
Let $g\in {\rm Spin}(N)$.
Then $g$ acts on $N$ by a special orthogonal transformation still denoted by $g$.
The map ${\rm Spin}(N)\to SO(N)$ is a covering of order $2$.
This map induces an isomorphism of the Lie algebra of ${\rm Spin}(N)$ with
the Lie algebra $\so(N)$ of $SO(N)$.
So for $X\in \so(N)$, we can  compute $\exp (X)$ either in $SO(N)$ or in
${\rm Spin(N)}$. We will indicate where we compute this exponential when
the context is ambiguous.

Assume that $N$ is even dimensional.
An orientation $o_N$ on $N$ determines a decomposition of the spinor space $S_N$ of $N$ in $S_N^+\oplus S_N^-$.
Denote
 $${\rm Tr}(g,S_N)={\rm Tr}(g,S_N^+)+{\rm Tr}(g,S_N^-),$$
$${\rm Str}(g,S_N,o_N)={\rm Tr}(g,S_N^+)-{\rm Tr}(g,S_N^-).$$

Let $N=\R e_1\oplus \R e_2$, where $(e_1,e_2)$ is an orthonormal basis
and orientation $o_N$ given by $e_1\wedge e_2$.
Then in ${\rm Clifford}(N)$, $e_1^2=-1,e_2^2=-1, e_1 e_2+e_2 e_1=0$.
So $g=\exp(\theta e_1 e_2)=\cos(\theta)+\sin(\theta) e_1e_2$ in
${\rm Spin}(N)$
and

\begin{equation}\label{sigmag}
\sigma(g)=\cos(\theta)+\sin(\theta) e_1\wedge e_2.\end{equation}

\begin{equation}\label{Tr}
{\rm Tr}(g,S_N)=e^{-i\theta}+e^{i\theta}.\end{equation}

\begin{equation}\label{Str}
{\rm Str}(g,S_N,o_N)=e^{-i\theta}-e^{i\theta}.\end{equation}

Let $X\in \so(N)$ given by $Xe_1=\phi e_2$, $Xe_2=-\phi e_1$.
The element $X$ is identified to $\frac{\phi}{2} e_1 e_2$ in
the Lie algebra of ${\rm Spin}(N)\subset {\rm Clifford} (N)$.
The exponential
$\exp (X)$ in $SO(N)$ is
$$\left(
  \begin{array}{cc}
    \cos(\phi) & -\sin(\phi) \\
    \sin(\phi) & \cos(\phi) \\
  \end{array}
\right)$$
while the exponential of $X$ in ${\rm Spin}(N)$
is
$\cos(\phi/2)+\sin(\phi/2)e_1e_2$.

\begin{definition}\label{def:defchiK}
Let $(N,o_N)$ be an oriented  even dimensional Euclidean  vector space.
  Define the function $\chi(N,o_N)$ on $\so(N)$  by
  $$\chi(N,o_N)(R)=\frac{1}{{\rm Str}(e^R,S_N,o_N)}.$$
  \end{definition}
The function $R\mapsto \chi(N,o_N)(R)$ is defined and analytic on the open set
where  ${\rm Str}(e^R,S_N,o_N)\neq 0$.
In any case,
 $\det^{1/2}_{N,o_N}(R)\chi(N,o_N)(R)$ is analytic at $R=0$,
and (as verified for $\dim N=2$)

\begin{equation}\label{eq:jchi}
j^{-1/2}(N)(R)\det^{-1/2}_{N,o_N}(R)=(-i)^{\dim N/2}\chi(N,o_N)(R).
\end{equation}

{\bf Assume $({\rm Id}-g)$ is an invertible transformation on $N$.
Then the top term of $\sigma(g)\in \Lambda N$ is not zero,
 so defines an orientation $o_g$ on $N$.}
\begin{definition}\label{def:Ospin}
If $Y\in \so(N)$ is such that  $\det_N({\rm Id}_N-e^Y)\neq 0$, we denote by
 ${\rm Os}(Y,o_N)$ the ratio of the orientation provided with $\exp(Y)$
 (computed in ${\rm Spin}(N)$) and a given orientation $o_N$ of $N$.
 \end{definition}

If $Y\in \so(N)$ is invertible and sufficiently small, the orientation determined by
$Y$ or by $\exp(Y)$ (in ${\rm Spin}(N)$)  are the same.

\begin{definition}
Let $(N,o_N)$ be an even dimensional  Euclidean vector  space, and $g\in {\rm Spin}(N)$.
We define a function $\chi(g;N,o_N)$ on $\so(N,g)$ by
$$\chi(g;N,o_N)(R)=\frac{1}{{\rm Str}(ge^R,S_N,o_N)}.$$
This is an $SO(N,g)$ invariant function of $R\in \so(N,g)$  defined  on the set of $R$ such that
$\det_N({\rm Id}-g e^R)\neq 0$.
If $({\rm Id}-g)$ is invertible, then
$\chi(g; N,o_N)(R)$ defines a germ at $0$
of an
analytic function on $\so(N,g)$.

\end{definition}
Beware that the function $\chi(g;N,o_N)$ depends now of $3$ arguments.
The preceding function
$\chi(N,o_N)$ is the specialization of $\chi(g;N,o_N)$ at $g=1$, that is  $\chi(N,o_N)(R)=\chi(1;N,o_N)(R)$.
More generally, if $R$ commutes with $Y$, we have, for $Y$ small,
$$\chi(N,o_N)(Y+R)=\chi(e^Y;N,o_N)(R).$$
So we have:
\begin{equation}\label{eq:jeul}
j^{-1/2}(N)(Y+R)\det_{N,o_N}^{-1/2}(Y+R)=(-i)^{\dim N/2}\chi(e^Y;N,o_N)(R).
\end{equation}

If   $N$  is an  even dimensional Euclidean vector   space and $g\in {\rm Spin}(N)$,
 we write $$N=N_0\oplus N_1$$
  the $g$-invariant decomposition of $N$  such that  $g$ acts by
the identity on $N_0$, and $({\rm Id}-g)$ restricts to an invertible transformation of $N_1$.
The element $g$ belongs to ${\rm Spin}(N_1)\subset {\rm Spin}(N)$, since $g$ commutes with
$N_0\subset {\rm Clifford}(N)$.
Let $Y\in \so(N)$ commuting with $g$.
We assume that the restriction of $Y$ to $N_0$ is invertible.
If $Y$ is sufficiently small, the transformation $g\exp Y$ of $N$  is invertible.
Choose orientations $o_{N_0},o_{N_1},o_N$ on $N_0,N_1,N$.

\begin{proposition}\label{pro:MAINformulaK}
Let  $R\in \so(N)$ commuting with $g$ and $Y$.
Then, if $Y$ is sufficiently small,
$$\chi(N_0,o_{N_0})(Y+R)\chi(g;N_1,o_{N_1})(Y+R)=\frac{o_N}{o_{N_0}\wedge o_{N_1}}
\chi(g \exp Y ; N,o_N)(R).$$
\end{proposition}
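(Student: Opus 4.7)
The approach is to reduce the identity to the multiplicativity of spinor supertraces under the graded tensor product decomposition ${\rm Clifford}(N) = {\rm Clifford}(N_0)\,\hat\otimes\,{\rm Clifford}(N_1)$. First I would observe that, since $Y$ and $R$ both commute with $g$, they preserve the $g$-invariant decomposition $N = N_0 \oplus N_1$ (as $N_0 = \ker({\rm Id}-g)$ and its orthogonal complement are stabilized by anything commuting with $g$). Writing $Y = Y_0 + Y_1$ and $R = R_0 + R_1$ with $Y_i, R_i \in \so(N_i)$, the two $\chi$-factors on the left hand side of the proposition are, unpacking the definitions, the inverse supertraces on $S_{N_0}$ and $S_{N_1}$ of $\exp(Y_0+R_0)$ and $g\exp(Y_1+R_1)$ respectively.

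Next I would use the standard identification $S_N = S_{N_0}\,\hat\otimes\, S_{N_1}$ of spinor modules, where the induced $\Z/2\Z$-grading on $S_N$ corresponds to the orientation $o_{N_0}\wedge o_{N_1}$. Under this identification the supertrace is multiplicative on even elements of the Clifford algebras:
$${\rm Str}(a\,\hat\otimes\, b,\, S_N,\, o_{N_0}\wedge o_{N_1}) = {\rm Str}(a,\, S_{N_0},\, o_{N_0})\cdot{\rm Str}(b,\, S_{N_1},\, o_{N_1}).$$
Since $g$, $Y$, $R$ pairwise commute and $g \in {\rm Spin}(N_1) \subset {\rm Spin}(N)$, the element $g\exp(Y+R) \in {\rm Spin}(N)$ factors as $\exp(Y_0+R_0)\cdot g\exp(Y_1+R_1)$, with the two factors lying in ${\rm Spin}(N_0)$ and ${\rm Spin}(N_1)$ respectively via the natural inclusions coming from the Clifford subalgebras. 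These act on $S_N$ as $\exp(Y_0+R_0)\,\hat\otimes\, g\exp(Y_1+R_1)$, so applying the multiplicativity above and taking reciprocals yields the desired identity with the orientation $o_{N_0}\wedge o_{N_1}$ on the right hand side.

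To convert to the orientation $o_N$ stated in the proposition, I would use that $\chi(\cdot;\,N,\,o)(\cdot)$ is an odd function of the orientation: reversing $o$ swaps $S_N^+$ and $S_N^-$, flipping the sign of ${\rm Str}$ and hence of $\chi$. This produces the scalar factor $o_N/(o_{N_0}\wedge o_{N_1}) = \pm 1$. The main obstacle I anticipate is the bookkeeping in the middle step: one must carefully verify that the graded tensor product gives the correct $\Z/2\Z$-grading on $S_N$ corresponding to $o_{N_0}\wedge o_{N_1}$, so that multiplicativity of the supertrace holds with no hidden signs on the even spin elements under consideration. This compatibility is standard (cf.\ \cite{BGV}, Chapter~6), but it is where all the sign and orientation conventions must be aligned; after that, the remainder of the proof is direct manipulation of the definitions.
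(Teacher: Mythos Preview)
Your proof is correct and follows essentially the same approach as the paper: both arguments reduce the identity to the multiplicativity of the spinor supertrace under the decomposition $S_N = S_{N_0}\otimes S_{N_1}$, then absorb the orientation discrepancy into the sign $o_N/(o_{N_0}\wedge o_{N_1})$. Your treatment is in fact slightly more explicit about the decomposition of $Y$ and $R$ and about why the orientation sign arises, but the underlying argument is the same.
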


In this formula, if $A\in \so(N)$ is a matrix respecting the decomposition
$N=N_0\oplus N_{1}$,
 $\chi(N_0,o_{N_0})(A)$ denotes $\chi(N_0,o_{N_0})(A_0)$,
 where $A_0$ is the restriction of $A$ to $N_0$, etc...
\begin{remark}\label{rem:orgY}
When $Y$ is sufficiently small and invertible on $N_0$,
${\rm Id}-g \exp Y$ is an invertible transformation of $N$,
and the orientation on $N$ given by $g\exp Y$
coincide with the orientation compatible with the orientation of
$N_1$ given by $g_1$ and the orientation on $N_0$ given by $Y$.
\end{remark}

\begin{proof}
 The left hand side is
$$\frac{1}{{\rm Str}(e^{Y+R},S_{N_0},o_{N_0})}\frac{1}{{\rm Str}(ge^{Y+R},S_{N_1},o_{N_1})}.$$

We have $S_N=S_{N_0}\otimes S_{N_1}$ and the transformation
$g e^{Y+R}$ of $S_N$ is the tensor product of the transformations
$e^{Y+R}|_{S_{N_0}}$ and  $g_1e^{Y+R}|_{S_{N_1}}$.
Use multiplicativity of supertraces, this is
$\chi(g;N,o_{N_0}\wedge_{N_1})(Y+R).$
Since $R$ commutes with $Y$,
this is equal to
$$\chi(ge^Y;N,o_{N_0}\wedge o_{N_1})(R).$$
\end{proof}

\section{Equivariant characteristic classes and integration}

\begin{definition}
Let $M$ be a  manifold with a $G$ action.
Let $\CN\to M$ be an  oriented even dimensional Euclidean vector bundle provided
with an equivariant spin structure and orientation $o_\CN$, and typical fiber $N$.
 Let $g\in G$ {\bf acting trivially} on $M$ and such that $1-g$ determines
  an invertible transformation of $\CN$.
We denote by ${\rm cw}\chi(g;\CN,o_\CN)(X)$
the equivariant characteristic class associated by the Chern-Weil homomorphism
to the function $\chi(g;N,o_N)$ on $\so(N,g)$, that is
$${\rm cw}\chi(g;\CN,o_\CN)(X)=\frac{1}{{\rm Str}( g e^{{\mathcal R}(X)},S_\CN,o_\CN)}.$$
\end{definition}

Let $M$ be an even dimensional   spin  manifold with a $G$-action and orientation $o_M$.
Consider $g\in G$, $m\in M(g)$,  and $\CN=TM/TM(g)\to M(g)$.
 So $g$ acts trivially on $M(g)$, and it produces an element $g\in {\rm Spin}(\CN_m)$
 with non zero top symbol term.  We denote by $o_g$  the orientation of $\CN$ produced by $g$.
 {\bf Then  $M(g)$  is even dimensional and
 is oriented by the quotient orientation}.

\begin{definition}
Let $M$ be a compact oriented even dimensional manifold with a $G$ invariant spin structure.
We denote by $\nu_g(X)\in H_{[G]}(M(g))$ the germ of equivariant class on
 $M(g)$ given by:
$$\nu_g(X)=(-2i\pi)^{-\dim M(g)/2}{\hat A}(M(g))(X) {\rm cw}\chi(g;\CN,o_g)(X).$$
Here $\CN=TM/TM(g)$ is the normal bundle of $M(g)$ in $M$.
\end{definition}

The following theorem is a particular case of the push-forward formula in
\cite{duflo-vergne} (see Theorem 134).

\begin{theorem}\label{theo:Kint}
Let ${\bf b}=(b_g)_{g\in U}$ be a section of the sheaf $\CSK_G(M)$ defined over
$U\subset G$.  Then the germs of functions $\theta_g$ defined at $g\in U$
by
$$\theta_g(g \exp X)=\int_{M(g)} b_g(X) \nu_g(X)$$
glue to an analytic function $\theta$ on $U$.
\end{theorem}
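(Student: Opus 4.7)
Each $\theta_g$ is a well-defined germ of analytic function at $g$: the class $b_g$ lies in $H_{[G]}(M(g))$, $\hat A(M(g))$ is in $H_{[G]}(M(g))$, and since $({\rm Id}-g)$ is invertible on $\CN=TM/TM(g)$ the function $\chi(g;N,o_g)$ is analytic near $0\in \so(N,g)$, so ${\rm cw}\chi(g;\CN,o_g)$ is in $H_{[G]}(M(g))$; as $M(g)$ is compact and oriented by the quotient of $o_M$ by $o_g$, integration of the product produces a germ of analytic function on $g\exp\g$ near $g$.

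The heart of the proof is the compatibility between $\theta_g$ and $\theta_{g'}$ for $g':=g\exp Y$, with $Y\in\g$ sufficiently small that $g'\in U$ and $M':=M(g')=M(g)\cap M(Y)$. I would apply the BV localization formula to $\theta_g(g\exp(Y+X))=\int_{M(g)}b_g(Y+X)\nu_g(Y+X)$ with respect to the vector field $Y_{M(g)}$ on $M(g)$, whose zero locus is $M'$, giving
\begin{equation*}
\theta_g(g\exp(Y+X))=\int_{M'}(-2\pi)^{(\dim M(g)-\dim M')/2}\,\frac{b_g(Y+X)\,\nu_g(Y+X)}{{\rm Eul}(TM(g)/TM',o_Y)(Y+X)}.
\end{equation*}
The bouquet descent condition (Definition \ref{def:sheaf}) gives $b_g(Y+X)|_{M'}=b_{g'}(X)$; it remains to identify the remaining factor with $\nu_{g'}(X)$. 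Decomposing the normal bundle of $M'$ in $M$ as $TM/TM'=N_0\oplus N_1$, with $N_0=TM(g)/TM'$ (on which $g$ is trivial and $Y$ invertible) and $N_1=(TM/TM(g))|_{M'}$ (on which $({\rm Id}-g)$ is invertible), one has the multiplicative splittings $\hat A(M(g))(Y+X)|_{M'}=\hat A(M')(X)\cdot{\rm cw}\,j^{-1/2}(N_0)(Y+X)$ and ${\rm cw}\chi(g;\CN,o_g)(Y+X)|_{M'}={\rm cw}\chi(g;N_1,o_g)(Y+X)$. Applying identity (\ref{eq:jeul}) on $N_0$ to convert $j^{-1/2}(N_0)/\det^{1/2}_{N_0,o_Y}$ into a multiple of $\chi(e^Y;N_0,o_Y)$, and then Proposition \ref{pro:MAINformulaK} to merge $\chi(N_0,o_Y)$ with $\chi(g;N_1,o_g)$ into $\chi(g\exp Y;TM/TM',o_{g'})$, collapses the integrand to $b_{g'}(X)\nu_{g'}(X)$.

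The principal technical task is a careful tracking of orientations and numerical constants: the compatibility between $o_Y$ on $N_0$, $o_g$ on $N_1$, and $o_{g'}$ on $N_0\oplus N_1$ required by Proposition \ref{pro:MAINformulaK} is exactly the content of Remark \ref{rem:orgY}; the BV prefactor $(-2\pi)^{(\dim M(g)-\dim M')/2}$, the $(-2i\pi)^{-\dim M(g)/2}$ from $\nu_g$, and the $(-i)^{\dim N_0/2}$ produced by (\ref{eq:jeul}) must combine to yield exactly the $(-2i\pi)^{-\dim M'/2}$ appearing in $\nu_{g'}$. Once the pointwise equality $\theta_g(g\exp(Y+X))=\theta_{g'}(g'\exp X)$ is established for $X$ small, the germs $\theta_g$ glue automatically by the sheaf property of analytic functions into a single analytic function $\theta$ on $U$.
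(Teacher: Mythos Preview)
Your proposal is correct and follows essentially the same approach as the paper's own proof: apply BV localization on $M(g)$ with respect to $Y$, use the descent condition to replace $b_g(Y+X)|_{M'}$ by $b_{g'}(X)$, split $\hat A(M(g))(Y+X)|_{M'}$ multiplicatively, invoke equation~(\ref{eq:jeul}) on the $N_0$-factor, and finish with Proposition~\ref{pro:MAINformulaK} together with the orientation compatibility of Remark~\ref{rem:orgY}. The only cosmetic difference is that the paper phrases the key step as verifying the identity $(-2\pi)^{\mathrm{rk}\,\CN_0/2}\,\nu_g(Y+X)|_{M'}\,{\rm Eul}^{-1}(\CN_0,o_Y)(Y+X)=\nu_{g\exp Y}(X)$ directly, whereas you describe the same computation as collapsing the localized integrand; the ingredients and their order of use are identical.
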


 This means that for $Y$ sufficiently small, we have the consistency relation
 $$\theta_g(g (\exp Y \exp X))=\theta_g(g \exp (Y+ X))=
 \theta_{g\exp Y}((g \exp Y) \exp X)$$
 as equality of germs at $0$ of analytic functions of $X\in \g$.  Thus, this insures
 that we can find  an analytic function $\theta$ on $U$ so that
 $\theta_g(g\exp X)=\theta(g\exp X)$.

\begin{proof}
Fix $Y$ sufficiently small so  that $ M(g\exp Y)=M(g)\cap M(Y)$.
Let $$I_g(X)=\int_{M(g)} b_g(X) \nu_g(X).$$
 We have to prove that
 $I_g(Y+X)=I_{g \exp Y}(X)$ as an equality of germs of analytic functions at $X=0$.

Let $\CN_1\to M(g)$  be the normal bundle of $M(g)$ in $M$ oriented by $g$,
and $\CN\to M(g e^Y)$ be the normal bundle of $M(ge^Y)$ in $M$ oriented by $ge^Y$.

Decompose
$$TM|_{M(g \exp Y)}={\mathcal T}_0\oplus \CN_{0}\oplus \CN_1|_{M(g)\cap M(Y)},$$
where ${\mathcal T}_0$ is the tangent bundle $T(M(g\exp Y))$,
$\CN_{0}$ the normal bundle of $M(g\exp Y)=M(g)\cap M(Y)$ in $M(g)$, oriented by
$Y$.

From the localization formula and the definition of the sheaf $\CSK_G(M)$,
it is sufficient to prove
that
$$(-2\pi)^{rk \CN_0/2}\nu_g(Y+X)|_{M(g)\cap M(Y)}
{\rm Eul}^{-1}(\CN_0,o_Y)(Y+X)=\nu_{g\exp Y}(X)$$
as an identity in $H_{[G]}(M(g)\cap M(Y))$.
So we need to prove that
$$\left({\hat A}(M(g))(Y+X)
{\rm cw}\chi(g;\CN_1,o_g)(Y+X)\right)|_{M(g)\cap M(Y)}{\rm Eul}^{-1}(\CN_0,o_Y)(Y+X)$$
is equal to
$$(-i)^{\dim \CN_0/2}{\hat A}(M(g\exp Y))(X)
{\rm cw}\chi(g\exp Y;\CN,o_{g\exp Y})(X).$$

Since $Y$ vanishes on $M(g)\cap M(Y)$, the  class
${\hat A}(M(g))(Y+X)$ restricted to $M(g\exp Y)$
is equal to
${\hat A}(M(g\exp Y) )(X) {\hat A}(\CN_0)(Y+X)$.

Factoring ${\hat A}(M(g\exp Y))$, it is sufficient to prove that

$${\hat A}(\CN_0)(Y+X){\rm Eul}^{-1}(\CN_0,o_Y)(Y+X)
{\rm cw}\chi(g;\CN_1,o_g)(Y+X)|_{M(g)\cap M(Y)}$$
is equal to
$$(-i)^{\dim(\CN_0)/2}{\rm cw}\chi(g\exp Y;\CN,o_{g\exp Y})(X).$$

  Applying the Chern-Weil homomorphism to Formula \ref{eq:jeul},
$$(-i)^{-\dim \CN_0/2}\hat A(\CN_0)(Y+X){\rm Eul}^{-1}(\CN_0,o_Y)(Y+X)=
{\rm cw}\chi(\CN_0,o_Y)(Y+X).$$

So we need to prove that
$${\rm cw}\chi(\CN_0,o_Y)(Y+X)
{\rm cw}\chi(g;\CN_1,o_g)(Y+X)|_{M(g)\cap M(Y)}$$
is equal to
$${\rm cw}\chi(g\exp Y;\CN,o_{g\exp Y})(X).$$

The bundle $\CN_0$ is provided with  a vertical action (locally constant) of $Y$;
So its structure group can be reduced to $SO(N_0,Y)$.
The bundle $\CN_1$ is provided with a vertical action (locally constant) of $g,Y$;
So its structure group is reduced to  $SO(N_1,Y)\cap SO(N_1,g)$.
So their respective
equivariant curvatures are valued in the Lie algebra of these groups.

Since $Y$ vanishes on $M(Y)$, the  various equivariant curvatures
${\mathcal R}(Y+X)$ restricted to $M(Y)$
are $\CL(Y)+{\mathcal R}(X)$, by Formula \ref{eq:RS}.

So the equivariant form
 $$X\mapsto {\rm cw}\chi(\CN_0,o_Y)(Y+X)$$
 is the image by the Chern Weil isomorphism
  of the invariant function
  $$R\mapsto \chi(N_0,o_{Y})(Y+R)$$ on $\so(N_0,Y)$.

 The equivariant form
 $$X\mapsto {\rm cw }\chi(g; \CN_1,o_{g})(Y+X)$$
 is the image by the Chern Weil homomorphism of the invariant function
 $$R\mapsto  \chi(g; N_1,o_{g})(Y+R)$$
 on $SO(N_1,Y)\cap SO(N_1,g)$.

  The equivariant form
 $$X\mapsto {\rm cw}\chi(g\exp Y;\CN,o_{g\exp Y})(X)$$
 is the image by the Chern Weil homomorphism of the invariant function
 $$R\mapsto  \chi(g\exp Y; N,o_{g\exp Y})(R)$$ on $SO(N,g)\cap SO(N,Y)$.

So we see that
 equality follows from Proposition \ref{pro:MAINformulaK}.
\end{proof}

We now define twisted equivariant Chern characters.

\begin{definition}
Let $\CW\to M$ be a $G$-equivariant complex vector bundle with typical fiber $W$.
The equivariant Chern character ${\rm Ch}(\CW)(X)$ of $\CW$  is the image
by the Chern Weil homomorphism of the function $R\mapsto {\rm Tr}(e^R,W)$ on $\gl(W)$.
\end{definition}
Here we consider
$X\mapsto {\rm Ch}(\CW)(X)$ as an element of $H_{[G]}(M)$.

Let $W$ be a complex vector space and $s\in GL(W)$ be a
semi-simple transformation of $W$. Let $\gl(W,s)$ be the space of transformations $R$ of $W$ commuting with $s$.
Then $R\mapsto {\rm Tr}(s e^R,W)$ is an invariant function by $GL(W,s)$.

\begin{definition}
 Let $\CW\to M$ be a $G$ equivariant complex vector bundle over
 $M$ with typical fiber $W$. Assume $g\in G$
acts trivially on $M$.
Then ${\rm Ch}(g,\CW)(X)\in H_{[G]}(M)$,
the $g$-twisted equivariant Chern character,
is the image by the Chern Weil homomorphism of the function
$R\mapsto  {\rm Tr}(g e^R,W)$  on $\gl(W,g)$.
\end{definition}
As usual in the definition above, abusing notations we still denote by $g$ the action of $g$ on $\CW_m$ (the conjugacy class of $g$ is constant on each connected component).

We now define the bouquet of Chern characters.

 Let $\CW\to M$ be a $G$ equivariant complex vector bundle over $M$ with typical fiber $W$.
 Then $g$  acts trivially on $M(g)$.
 So we can define
${\rm Ch}(g;\CW)(X)$, the twisted Chern character, as a $G$ equivariant form on $M(g)$.
It is immediate to see the following proposition.
\begin{proposition}
 ${\bf bch}(\CW)=({\rm Ch}(g;\CW))_{g\in G}$
is a global section of the sheaf $\CSK_G(M)$.
\end{proposition}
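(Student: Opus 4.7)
The claim is a purely local sheaf compatibility, to be verified fibrewise through Chern--Weil. Fix $g \in G$ and choose $a > 0$ small enough that $M(g \exp Y) = M(g) \cap M(Y)$ for every $Y \in B_a$. What must be shown is the germ identity
$${\rm Ch}(g;\CW)(Y+X)\big|_{M(g)\cap M(Y)} = {\rm Ch}(g\exp Y;\CW)(X)$$
in $H_{[G]}(M(g)\cap M(Y))$. Since ${\rm Ch}(g;\CW)$ is defined on the whole of $M(g)$ for every $g$, this local compatibility is all that is needed to assemble $\mathbf{bch}(\CW)$ into a global section of $\CSK_G(M)$.

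The strategy is to reduce everything to an algebraic identity in the fibres of $\CW$. On $M(g)\cap M(Y)$ the elements $g$ and $\exp Y$ both fix every point of $M$, while acting on the fibres of $\CW$ by commuting semisimple transformations, commutativity coming from $G$ being abelian. Thus the structure group of $\CW|_{M(g)\cap M(Y)}$ reduces to $GL(W,g) \cap GL(W,\CL(Y))$, and one can pick a $G$-invariant connection whose equivariant curvature $\CR(X)$ takes values in the Lie algebra of this joint stabilizer; in particular $\CR(X)$ commutes with both $g$ and $\CL(Y)$.

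With this choice, formula \eqref{eq:RS} restricted to $M(g)\cap M(Y)$ reads $\CR(Y+X) = \CL(Y) + \CR(X)$ for the equivariant curvature of $\CW$ regarded as a bundle on $M(g)$ (where $g$ already acts trivially on the base). Substituting into the defining formula ${\rm Ch}(g;\CW)(X') = {\rm Tr}(g\, e^{\CR(X')}, W)$ and using the commutativity to split the exponential, one obtains
$${\rm Ch}(g;\CW)(Y+X)\big|_{M(g)\cap M(Y)} = {\rm Tr}\bigl(g\, e^{\CL(Y)+\CR(X)},\, W\bigr) = {\rm Tr}\bigl(g\, e^{\CL(Y)}\, e^{\CR(X)},\, W\bigr).$$
Finally $g \cdot e^{\CL(Y)}$ is by construction the fibrewise action of $g\exp Y \in G$ on $\CW$, so the last trace equals ${\rm Tr}\bigl((g\exp Y)\, e^{\CR(X)},\, W\bigr)$, which is the Chern--Weil representative of ${\rm Ch}(g\exp Y;\CW)(X)$. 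I do not foresee any genuine obstacle: the argument is a bookkeeping exercise resting on \eqref{eq:RS} and on the identity $g\, e^{\CL(Y)} = g\exp Y$ on fibres; the only point requiring care is to arrange the invariant connection compatibly with the successive reductions of structure group, which can be done by averaging and does not affect the cohomology class.
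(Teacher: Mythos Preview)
Your proposal is correct and is precisely the verification the paper has in mind when it says the proposition is ``immediate to see.'' The paper gives no detailed argument here, but your use of \eqref{eq:RS} together with the fibrewise identity $g\,e^{\CL(Y)} = g\exp Y$ is exactly the pattern the paper employs in the analogous step of the proof of Theorem~\ref{theo:Kint}, so there is nothing to add or compare.
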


Assume $M$ is an even dimensional spin manifold.
Let $D_\CW$ be the Dirac operator of the spin manifold $M$ twisted by $\CW$.
Then the bouquet integral of ${\bf bch}(\CW)$ is an analytic function $\Theta$ on $G$,
and near every $g\in G$, we have the  formula
as a germ of analytic function at $g\in G$:
$$\Theta(g\exp X)=\int_{M(g)}{\rm Ch}(g;\CW)(X)\nu_g(X)$$
and this is the Berline-Vergne formula for the equivariant index of $D_\CW$.
    The $K$-integration formula for general bouquets was defined in order that this formula holds for the bouquet of Chern characters.

In particular, we have
\begin{equation}\label{eq:equidirac}
{\rm index}(D_\CW)(\exp X)=(2i\pi)^{-\dim M/2}\int_M {\rm Ch}(\CW)(X) {\hat A}(M)(X).
\end{equation}

\section{The Witten series}

Let $V$ be a complex vector space.
For $a$ an indeterminate, define
 $\Lambda_a (V)=\oplus_{k=0}^{\dim V} a^k \Lambda^k V$,
$S_a(V)=\oplus_{k=0}^{\infty} a^k S^k V$.

Let $q^{1/2}$ be an indeterminate. Let us consider the Witten series of vector spaces:
$$W_{1,q}(V)=
\prod_{n=1}^{\infty}\Lambda_{q^{n-1/2}}(V) \prod_{n=1}^{\infty}S_{q^{n}}(V)$$
$$ = \C+(V) q^{1/2}+ (V\oplus \Lambda^2V) q+ (V\oplus \otimes^2V
\oplus \Lambda^3V) q^{3/2}+\cdots.$$

If $g$ is a transformation of $V$, it acts on $W_{1,q}(V)$ and one has the equality
$${\rm Tr}(g,W_{1,q}(V))=\frac{\prod_{n=1}^{\infty} \det_V(1+q^{n-1/2}g)}{\prod_{n=1}^{\infty} \det_V(1-q^{n}g)}$$
where the second member is understood as the Taylor series at $q=0$.

The "dimension" of $W_{1,q}(V)$ is given by the trace of $g=1$ and is the series
$$\left(\frac{\prod_{n=1}^{\infty} (1+q^{n-1/2})}{\prod_{n=1}^{\infty} (1-q^{n})}\right)^{\dim V}.$$

\begin{definition}\label{wq}
If $\CV\to M$ is a $G$ equivariant complex vector bundle on $M$, with typical fiber $V$,
 we denote by $W_{1,q}(\CV)$
 the series of bundles with typical fiber $W_{1,q}(V)$.
 \end{definition}

Witten famous rigidity theorem is that the equivariant index
$\Theta(q,g)$
of the Dirac operator $D$ twisted by $W_{1,q}(TM\otimes_\R \C)$
does not depend of $g\in G$.
Following Rosu, we will prove this in the next sections. It is enough to prove it for an $S^1$ action.

Recall that for any non trivial action of $S^1$ on a spin manifold $M$,
${\rm Index}(D)(g)=0$  for any $g\in G$ (this is the Atiyah-Hirzebruch rigidity theorem).
The simplest consequence of Witten rigidity theorem is
that for a spin manifold $M$  with  $S^1$-action,
${\rm Index}(D_\CW)(g)$
is constant for $\CW=TM\otimes_\R \C$, that is ${\rm Index}(D_\CW)(g)={\rm Index}(D_\CW)(1)$, for any $g\in G$.
An example of a non trivial action of $S^1$ on a spin manifold  where ${\rm Index}(D_\CW)(1)$ is not zero is given in \cite{ChenHan} (Example 2).

If we consider the coefficient in $q^{3/2}$ in the series
 $W_{1,q}(TM\otimes_\R \C)$, Witten theorem implies that the sum
 ${\rm Index}(D_{\CW_1})(g)+ {\rm Index}(D_{\CW_2})(g)$
 is a constant function on $G$, when $\CW_1=S^2(TM\otimes_\R \C)$ and $\CW_2=
 \Lambda^3(TM\otimes_\R \C)$. However as seen for $M=P_3(\C)$,
 in general the separate equivariant indices ${\rm Index}(D_{\CW_1})(g)$
 and ${\rm Index}(D_{\CW_2})(g)$ are not constant functions on $G$.

\begin{definition}\label{wq}
If $\CV\to M$ is a $G$ equivariant complex vector bundle on $M$, with typical fiber $V$,
 we denote by ${\rm Ch}(W_{1,q}(\CV))$
 the series of equivariant Chern characters of the series of bundles $W_{1,q}(\CV)$, that is
the image by the Chern-Weil homomorphism
of the function ${\rm Tr}(e^R,W_{1,q}(V)).$
\end{definition}

So the equivariant index of the Dirac operator on $M$ twisted by the series  $W_{1,q}(\CV)$
is the series
\begin{equation}\label{equidiracloop}
{\rm index}(D_{W_{1,q}(\CV)})(q,\exp X)=(2i\pi)^{-\dim M/2}\int_M {\hat A}(M)(X)
{\rm Ch}(W_{1,q}(\CV))(X).
\end{equation}

So Witten theorem is equivalent to the assertion that

$$\int_M \hat A(M)(X){\rm Ch}(W_{1,q}(TM\otimes_\R \C))(X)$$
is independent of $X$.

%We can verify that for $M=P_3(\C)$, the index of the Dirac operator
%twisted by $V=TM\otimes_\R\C$ or $\Lambda^2 V$ is $0$.
%BUT  the index of the Dirac operator
%twisted by $L=\Lambda^3 V$ is not zero, BUT the index of
%the Dirac operator twisted by $V\otimes V\oplus\Lambda^3V$ is zero.

We will also need the following series:
 $$W_{2,q}(V)=
\prod_{n=1}^{\infty}\Lambda_{-q^{n-1/2}}(V)
\prod_{n=1}^{\infty}S_{-q^{n}}(V)$$
so that
$${\rm Tr}(g,W_{2,q}(V))=\frac{\prod_{n=1}^{\infty}
\det_V(1-q^{n-1/2}g)}{\prod_{n=1}^{\infty} \det_V(1+q^{n}g)}.$$

$$W_{3,q}(V)=
\prod_{n=1}^{\infty}\Lambda_{q^{n}}(V) \prod_{n=1}^{\infty}S_{q^{n-1/2}}(V)$$
so that
$${\rm Tr}(g,W_{3,q}(V))=
\frac{\prod_{n=1}^{\infty}
\det_V(1+q^{n}g)}{\prod_{n=1}^{\infty} \det_V(1-q^{n-1/2}g)},$$
$$W_{4,q}=\prod_{n=1}^{\infty}\Lambda_{-q^{n}}(V)
\prod_{n=1}^{\infty}S_{-q^{n-1/2}}(V)$$
so that
$${\rm Tr}(g,W_{4,q}(V))=\frac{\prod_{n=1}^{\infty} \det_V(1-q^{n}g)}
{\prod_{n=1}^{\infty} \det_V(1+q^{n-1/2}g)}.$$

\section{Equivariant elliptic cohomology}

 Let  $U$ be  now a neighborhood on $0$ in
{\bf the complexified Lie algebra}
$\g_\C$.
Define the space of $G$-equivariant forms
$\CA_{G}(U,M)$ as the space of  maps $Z\mapsto \alpha(Z)$ from $U$ to $\CA(M)^G$
which depends holomorphically  of $Z\in U$.
If $Z=X+\sqrt{-1} Y$ is an element of $U$, with $X,Y\in \g$,
 the operator $\iota(Z_M)$ is defined as
$\iota(X_M)+\sqrt{-1} \iota(Y_M)$ on $\CA(M)$.
If $\alpha(Z)$ satisfies $d(\alpha(Z))-\iota(Z_M)\alpha(Z)=0$, we say that $\alpha$ is
a closed equivariant form with holomorphic coefficients.
We  still denote by $H_{[G]}(M)$ the space of germs of holomorphic equivariant cohomology classes at $Z=0$.
If the action of $G$ on $M$ is trivial, this is simply
$H^*(M)\otimes_{\R}\CO_0(\g_\C)$ where
$\CO_0(\g_\C)$ is the space of germs at $0$ of holomorphic functions on $\g_\C$.

The Chern Weil homomorphism $ \Phi \mapsto {\rm cw}(\Phi)(X)$ can be extended
analytically on $\g_\C$.
So we can define, for $Z\in \g_\C$ small enough,  $ {\hat A}(M)(Z)$ as the equivariant $\hat A$ class, etc....

\bigskip

   We will {\bf from now on} consider the case
   where $G=S^1$ with Lie algebra $\g$.
   We choose $J_{G}\in \g$, a generator of the lattice of elements $X\in \g$ such that $\exp(X)=1$.
The choice of $J_G$ determines an isomorphism of the Lie algebra of $\g$
with $\R$ by $x\mapsto xJ_G$,
and the exponential map $\g\to S^1$ is  $x\mapsto e^{2i\pi x}$.
The complexified Lie algebra $\g_\C$ is isomorphic to $\C$.  The exponential map $\g_\C \to G_\C=\C^*$
is  $z\mapsto e^{2i\pi z}$.

An equivariant form  $\alpha:\g_\C \to \CA(M)$ will be  written as
$z\mapsto \alpha(z)$, where $z\in \C$
is identified with $zJ_G\in \g_\C$, and $\alpha(z)\in \CA(M)$.

We  fix $\tau$  in the upper-half plane.
Let $L_\tau=\Z\oplus \Z \tau $ be the corresponding lattice in $\C$,
and $E_\tau=\C/L_\tau$ be the associated  elliptic curve.
We denote by $\pi_\tau: \C\to E_\tau$ the quotient map.
If $\gamma\in E_\tau$ and $z\in \C$, we still denote by $\gamma+z$ the image of
$\gamma+z$  in $E_\tau$.
We will say that $\gamma \in \C$ is of order $k$ in $E_\tau$ if
$\pi_\tau(\gamma)$ is of order $k$ in $E_\tau$.
So $k\gamma=\alpha+\beta \tau$ with $\alpha,\beta\in \Z$, and if
$a$ is an integer relatively prime with respect to $k$,
then $a\gamma\notin L_\tau$.

We consider the subset $F(M)$ of $S^1$ consisting of the $g$
such that $M(g)\neq M^G$. This is a finite subset of $S^1$ consisting  of elements of finite orders.
We denote by $O(M)$ the set of orders of elements $g\in F(M).$
The set $O(M)$ can be computed as follows.
Consider a connected component $C$ of $M(J_{G})$ and a point $m\in C$. Then the normal bundle $\CN$ of $C$ in $M$
carries an action of $S^1$.
We write $\CN_m\otimes_\R \C=\oplus_{a\in \Delta(C)} N_a$ where, for $v\in N_a$,
 $J_{G}v=2i\pi a v$. Here $\Delta(C)$ is a set  of integers and
 $$O(M)=\cup_{ C}\{a; a>0, a\in \Delta(C)\}.$$
We say that $\gamma\in E_\tau$ is special,
if the order $k$ of $\gamma$ is in $O(M)$. The set of special elements is finite.

Following Rosu, we define a sheaf ${\mathcal Ell}_G(M,\tau)$ over $E_\tau$ as follows.
If  $\gamma\in E_\tau$ is not a special point,
define $M_\gamma=M^G$.
If $\gamma\in E_\tau$ is a special point of order $k$,
define $M_\gamma=M(\exp(J_G/k))$.
Then $M^G\subseteq M_\gamma$ for all $\gamma\in E_\tau$.

The stalk of ${\mathcal Ell}_G(M,\tau)$ at a  point  $\gamma\in E_\tau$
is $H_{[G]}(M_\gamma)$.
The set of special points being finite,
 for any $\gamma\in \C$, then, for $y\in \C$ not $0$ and sufficiently small, $\gamma+y$  is not special.

 By definition (as in Definition \ref{def:sheaf}),
a collection ${\bf b}=(b_\gamma)_{\gamma\in U}$ of elements of $H_{[G]}(M_\gamma)$
is a section of the sheaf ${\mathcal Ell}_G(M,\tau)$
if and only if
$$b_\gamma(y+z)|_{M^G}=b_{\gamma+y}(z)$$
for any $y\neq 0$  in $\C$ small enough.
The equality is an equality of germs at $0$ of equivariant cohomology classes on $M^G$.
Such a collection ${\bf b}=(b_\gamma)_{\gamma\in U}$ will be called an elliptic bouquet of equivariant
forms.
This definition is  a " concrete " version of
 Grojnowski definition
\cite{groj} of delocalized equivariant elliptic cohomology.

If $M$ is a point, the sheaf ${\mathcal Ell}_G(M,\tau)$
is the sheaf ${\mathcal O}(E_\tau)$.

\bigskip

As defined in Rosu, the $\tau$-integration of an elliptic  bouquet defined on $U\subset E_\tau$ is an holomorphic function on $U$.
In particular,  the $\tau$-integration of a global section of ${\mathcal Ell}_G(M,\tau)$
is a constant. The rest of this note is devoted to review, and somewhat simplify, the definition of $\tau$-integration.

\bigskip

If ${\bf b}=(b_\gamma)_{\gamma \in U}$ is defined at $0$, then the local $\tau$-integration of $b$ at $0$ will be  by definition the germ of
the holomorphic function
$$z\mapsto
\int_{M}(-2i\pi)^{\dim M/2}\ b_0(z) {\hat A}(M)(z) {\rm Ch}(W_{1,q}(TM\otimes_\R \C))(z).$$

To define local $\tau$-integration at a general point $\gamma\in E_\tau$,
we will define  equivariant characteristic classes  $\mu_\gamma(z)$  in  Section \ref{mugamma}.
If $b$ is defined at $\gamma\in E_\tau$, then the local $\tau$-integration of $b_\gamma$  will be
the holomorphic function $\theta$ defined in a neighborhood of $\gamma$ by
$$\theta(\gamma+z)= \int_{M_\gamma} b_\gamma(z)
\mu_\gamma(z).$$

Integration at $0$, and the localization formula, suggests the local $\tau$-integration formula at
$\gamma$. The form $\mu_\gamma(z)$ should be (up to a constant)
$${\hat A}(M_\gamma)(z)
{\rm cw}\chi(g;\CN, o_\CN)(z) {\rm Ch}(g; W_{1,q}(TM\otimes_\R \C))(z)$$
with $g=\exp(\gamma J_G)$ in $G_\C$ and
$\CN$ the normal bundle of $M_\gamma$ in $M$.
But, if $\gamma$ is a special element of order $k$,
$\exp(J_G/k)$ generates a
finite subgroup  of $G$ isomorphic to
$\Z/k\Z$, and  we have only a vertical action of $\Z/k\Z$ on the normal bundle $\CN$  and {\bf not} of $J_G$.
So $g=\exp (\gamma J_G)=\exp((k\gamma)(J_G/k))$
in $G_\C$ does not act on $\CN$, except
if $k\gamma\in \Z$, that is $g\in G\subset G_\C$.
Then
$${\rm cw}\chi(g;\CN,o_\CN)(z) {\rm Ch}(g; W_{1,q}(TM\otimes_\R \C))(z)$$
is the image by the Chern-Weil map
of $$R\mapsto \frac{1}{{\rm Str}(ge^R,S_N,o_N)}
 {\rm Tr}(ge^R,W_{1,q}(N\otimes_\R \C)).$$
The main theorem of the next section is Theorem \ref{theo:wantindJ}, where we show that this formula still makes sense when $g=\exp (\gamma J_G)$
is in $G_\C$.

\section{Invariant functions  associated to elliptic functions}

Let $\tau$ be fixed in the upper half-plane and  $q=\exp(2i\pi \tau)$.
So $|q|<1$.

Let
$z\in \C$.
Define the function (the inverse of the Jacobi sine)
$$\Phi_1(z,\tau)=
\frac{1}{(e^{-i\pi z}-e^{i\pi z})}
 \frac{\prod_{n\geq 1}(1+q^{n-1/2}e^{2i\pi z})(1+q^{n-1/2}e^{-2i\pi z})}
 {\prod_{n\geq 1}(1-q^{n}e^{2i\pi z})(1-q^{n}e^{-2i\pi z})}.$$
It satisfies
$$\Phi_1(z+1,\tau)=-\Phi_1(z,\tau)$$
and
$$\Phi_1(z+\tau,\tau)=-\Phi_1(z,\tau).$$
The set of poles of the function $\Phi_1(z,\tau)$ is the lattice $L_\tau$.

Let $$\Phi_2(z,\tau)=\frac{1}{(e^{i\pi z}+e^{-i\pi z})}
\frac{\prod_{n\geq 1}(1-q^{n-1/2}e^{2i\pi z})(1-q^{n-1/2}e^{-2i\pi z})
}{\prod_{n\geq 1}(1+q^{n}e^{2i\pi z}) (1+q^{n}e^{-2i\pi z})},
$$

 $$\Phi_3(z,\tau)=(e^{i\pi z}+e^{-i\pi z})
\frac{\prod_{n\geq 1}(1+q^{n}e^{2i\pi z})(1+q^{n}e^{-2i\pi z})
}{\prod_{n\geq 1}(1-q^{n-1/2}e^{2i\pi z}) (1-q^{n-1/2}e^{-2i\pi z})},
$$

 $$\Phi_4(z,\tau)=(e^{i\pi z}-e^{-i\pi z})
\frac{\prod_{n\geq 1}(1-q^{n}e^{2i\pi z})(1-q^{n}e^{-2i\pi z})
}{\prod_{n\geq 1}(1+q^{n-1/2}e^{2i\pi z}) (1+q^{n-1/2}e^{-2i\pi z})}.
$$

Then $$\Phi_1(z+\frac{1}{2},\tau)=i\Phi_2(z,\tau),$$
 $$\Phi_1(z+\frac{\tau}{2},\tau)= q^{1/4}\Phi_3(z,\tau),$$
 $$\Phi_1(z+\frac{1}{2}+\frac{\tau}{2},\tau)=i q^{1/4}\Phi_4(z,\tau).$$

\bigskip

Let $R\in \gl(V)$,  and $i\in \{1,2,3,4\}$. Then
$R\mapsto {\rm Tr}(e^R,W_{i,q}(V))$
are  germs of analytic functions on $\gl(V)$.
Indeed, for example,
$${\rm Tr}(e^R,W_{1,q}(V))=\frac{\prod_{n=1}^{\infty}
\det_V(1+q^{n-1/2}e^{R})}{\prod_{n=1}^{\infty} \det_V(1-q^{n}e^R)},$$
the  infinite product in the numerator  as well as
the  infinite product in the denominator are convergent for $|q|<1$,
 and the value at $R=0$ is well defined. This is the "dimension" of $W_{1,q}(V)$.

\begin{definition}
For $R\in \so(V)$,  and $i\in \{1,2,3,4\}$
we define $$C_{i}(\tau,V)(R)={\rm Tr}(e^R,W_{i,q}(V\otimes_\R \C)).$$
\end{definition}

\bigskip

\begin{definition}\label{def:chiElliptic}

Let $(N,o_N)$ be an oriented even dimensional Euclidean vector space. Define
the function $Z(\tau,N,o_N)$ on $\so(N)$ by
$$Z(\tau,N,o_N)(R)=\frac{1}{{\rm Str}(e^R,S_N,o_N)}
{\rm Tr}(e^R,W_{1,q}(N\otimes_\R \C)).$$
\end{definition}
So $\det_{N,o_N}^{1/2}(R)Z(\tau,N,o_N)(R)$
is analytic  at $R=0$, and we have
\begin{equation}\label{eq:eqjLchi}
j^{-1/2}(N)(R)  C_1(\tau,N)(R) \det_{N,o_N}^{-1/2}(R)=(-i)^{\dim N/2}
Z(\tau,N,o_N)(R).
\end{equation}

\begin{remark}
If we consider $q$ as a formal parameter, then $Z(\tau,N,o_N)$
coincide for $q=0$ with the function $\chi(N,o_N)(R)=\frac{1}{{\rm Str}(e^R,S_N,o_N)}$
defined in Section \ref{sec:spinors}.
\end{remark}

Let $(N,o_N)$ be an oriented even dimensional Euclidean vector  space, and $J\in \so(N)$ such that
$\exp(J)={\rm Id}_N$.
Break the vector space $N\otimes_\R \C$ in a direct sum
$\oplus_{a\in \Delta} N_a$
with $\dim  N_a=1$. Here $\Delta$ is
a multilist of integers and
 the action of $J$ on $N_a\otimes_\R \C$
is $2i\pi a$. The integers $|a|$ are called the rotation numbers of $J$.
If $J$ is invertible,  $a=0$ is not in $\Delta$.
In this case, the list $\Delta$ has an even number of elements and
if $a\in \Delta$ then $-a\in \Delta$.
We write   $\Delta=\Delta_+\cup \Delta_-$.
Define $\rho(J,\Delta_+)=\sum_{a\in \Delta_+}a$.

Then

\begin{equation}\label{eq:epsiloJ}
\epsilon(J,N)=(-1)^{\rho(J,\Delta_+)}
\end{equation}
does not depend of the choice of $\Delta^+$.

\begin{definition}
Assume that $(N,o_N)$ is an  oriented even dimensional  Euclidean vector space.
Let $J\in \so(N)$, invertible, and such that $\exp(J)={\rm Id}_N$.
Let $R\in \so(N,J)$, and $\gamma\in \C$.
Define
$$Z(\gamma,J;\tau,N,o_N)(R)=\frac{1}{{\rm Str}(e^{\gamma J}e^R,S_N,o_N)}
 {\rm Tr}(e^{\gamma J}e^R,W_{1,q}(N\otimes_\R \C)).$$
\end{definition}

We consider $Z(\gamma,J;\tau,N,o_N)(R)$ as a function of
$R\in \so(N,J)$, defined if $R$ is such that  ${\rm Str}(e^{\gamma J}e^R,S_N,o_N)\neq 0$
and $\det_{N}(1-q^n e^{\gamma J} e^R)$ invertible for all $n\geq 1$.
Here the function $z\mapsto {\rm Str}(e^{zJ}e^R,S_N,o_N)$
is the holomorphic extension of the
analytic function $x\mapsto {\rm Str}(e^{xJ}e^R,S_N,o_N)$
for $x\in \R$.

Beware that now the function $Z(\gamma,J;\tau,N,o_N)$ depends of $5$ arguments.
We have for $R\in \so(N,J)$
 $$Z(\gamma,J;\tau, N,o_N)(R)=Z(\tau,N,o_N)(\gamma J+R).$$

\begin{lemma}
The function
$Z(\gamma,J;N,o_N)(R)$
is invariant by $SO(N,J)$.

Furthermore, if $\gamma\in \C$ is such that $a\gamma\notin L_\tau$ for
all rotation numbers $a$ of $J$,
then the function $R\mapsto Z(\gamma,J;\tau,N,o_N)(R)$
is analytic at  $R=0$.

\end{lemma}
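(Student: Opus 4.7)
The plan is to handle the two parts of the lemma separately, both by reducing to manipulations of convergent infinite products (using $|q|<1$) and, for the analyticity, to the well-understood singularities of $\Phi_1$.

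For the $SO(N,J)$-invariance, I would start with $h\in SO(N,J)$ and choose a lift $\tilde h\in {\rm Spin}(N)$. Because $h$ commutes with $J$ in $SO(N)$ and the covering ${\rm Spin}(N)\to SO(N)$ induces an isomorphism of Lie algebras, the equality ${\rm Ad}(\tilde h)J=J$ holds inside $\so(N)\subset {\rm Clifford}(N)$; consequently $\tilde h\,e^{\gamma J}=e^{\gamma J}\,\tilde h$ in the complexified Clifford algebra. For $R\in\so(N,J)$ one then has $e^{hRh^{-1}}=\tilde h\,e^{R}\,\tilde h^{-1}$ in the spin representation, and since $\tilde h$ lies in the even part of the Clifford $\Z/2\Z$-grading the supertrace is cyclic, giving
$${\rm Str}\bigl(e^{\gamma J}e^{hRh^{-1}},S_N,o_N\bigr)={\rm Str}\bigl(e^{\gamma J}e^R,S_N,o_N\bigr).$$
The same $h$ acts functorially on $W_{1,q}(N\otimes_\R\C)$ and commutes there with $e^{\gamma J}$, so the analogous identity for ${\rm Tr}(\,\cdot\,,W_{1,q}(N\otimes_\R\C))$ follows from ordinary cyclicity. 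Dividing, $Z(\gamma,J;\tau,N,o_N)$ is invariant under $h$.

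For the analyticity at $R=0$, I would decompose $N=\bigoplus_i N_i$ orthogonally into $J$-stable oriented real $2$-planes with $o_N=\bigwedge_i o_i$; on $N_i\otimes_\R\C$ the element $J$ has eigenvalues $\pm 2i\pi a_i$ where the $|a_i|$ exhaust the rotation numbers of $J$. Since $S_N=\bigotimes_i S_{N_i}$ and $W_{1,q}(N\otimes_\R\C)=\bigotimes_i W_{1,q}(N_i\otimes_\R\C)$, multiplicativity of (super)traces gives
$$Z(\gamma,J;\tau,N,o_N)(0)=\prod_i Z(\gamma,J|_{N_i};\tau,N_i,o_i)(0).$$
A direct $2$-dimensional calculation using formulas (\ref{Tr}) and (\ref{Str}) together with the product expansion of $W_{1,q}$ identifies each factor with $\pm\Phi_1(a_i\gamma,\tau)$. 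The poles of $\Phi_1(\cdot,\tau)$ are exactly $L_\tau$, so the hypothesis $a_i\gamma\notin L_\tau$ makes each factor finite.

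To upgrade finiteness at $R=0$ to analyticity in a neighborhood of $0$, I would observe that both ${\rm Tr}(e^{\gamma J}e^R,W_{1,q}(N\otimes_\R\C))$, written as the ratio of two absolutely convergent infinite products $\prod_n\det_{N\otimes\C}(1\pm q^{\bullet} e^{\gamma J}e^R)$ (convergence uniform on bounded subsets of $\so(N,J)$ because $|q|<1$), and ${\rm Str}(e^{\gamma J}e^R,S_N,o_N)$ are analytic in $R$ throughout $\so(N,J)$. Their quotient is analytic on any open set where none of the denominator factors $\det_{N\otimes\C}(1-q^n e^{\gamma J}e^R)$ (for $n\geq 1$) or ${\rm Str}(e^{\gamma J}e^R,S_N,o_N)$ vanishes. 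The $2$-plane factorization of the preceding paragraph translates each such vanishing at $R=0$ into a condition $a_i\gamma\in L_\tau$, which the hypothesis excludes; non-vanishing persists on a neighborhood of $R=0$ by continuity. The main delicate point I expect to navigate is the sign/orientation bookkeeping when factoring $Z$ into $2$-plane contributions and identifying them with $\Phi_1$, since the choice of the $o_i$ affects the sign of each supertrace factor as in (\ref{Str}); once this is done consistently with $o_N$, the rest is straightforward.
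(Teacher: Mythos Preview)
Your argument is correct and follows essentially the same route as the paper: invariance is handled directly (the paper simply declares it clear, while you spell out the ${\rm Spin}$ lift), and analyticity comes from passing to a $J$-stable $2$-plane decomposition and identifying the factors with values of $\Phi_1$, whose poles lie exactly on $L_\tau$. The one difference worth flagging is that the paper carries out the $2$-plane computation at a general diagonal $R\in\so(N,J)$ rather than only at $R=0$, obtaining the explicit formula $Z(\gamma,J;\tau,N,o_N)(R)=\nu(o_N,\Delta^+)\prod_{a\in\Delta^+}\Phi_1(a\gamma+r_a,\tau)$ (equation~(\ref{eq:FundR})); this identity, more than the lemma itself, is what drives the subsequent periodicity results (Lemma~\ref{lem:Zperiodic}, Propositions~\ref{pro:indgammak} and~\ref{pro:allW}), so you will want to record it in full generality rather than only its value at $R=0$.
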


\begin{proof}
Clearly $Z(\gamma,J;\tau,N,o_N)(R)$
is invariant by $SO(N,J)$.
So we need only to consider the case where $R\in \so(N,J)$
is diagonal with respect to the decomposition
$N\otimes_\R \C=\oplus_{a\in \Delta} N_a$.
We write $R v=2i\pi r_a v$ for $v\in N_a$,
so $Rv=-2i\pi r_{a}v$ on $N_{-a}$, since $R\in \so(N,J)$.
Let $\nu(o_N,\Delta^+)\in \{1,-1\}$
such that
$${\rm Str}(\exp(zJ),S_N,o_N)=\nu(o_N,\Delta^+)\prod_{a\in \Delta^+} (e^{-i \pi az}-e^{i\pi az}).$$
So

\begin{equation}\label{eq:FundR}
Z(\gamma,J;\tau,N,o_N)(R)=
\nu(o_N,\Delta^+)\prod_{a\in \Delta^+} \Phi_1(a\gamma+r_a,\tau).\end{equation}

Since the set of poles of $z\mapsto \Phi_1(z,\tau)$ is $L_\tau$,
we see that  the function $R\mapsto Z(\gamma,J;\tau,N,o_N)(R)$
(on $\so(N,J)$) is well defined as a germ of analytic function at $R=0$
 when $\gamma\in \C$ is such that
$a\gamma\notin L_\tau$ for all rotation numbers $a$ of $J$.
\end{proof}

Let us state some properties of the function
$Z(\gamma,J;\tau,N,o_N)$.

\begin{lemma}\label{lem:Zperiodic}
For $\gamma,y\in \C$, $R\in \so(N,J)$,
we have
$$Z(\gamma,J;\tau,N,o_N)(yJ+R)= Z(\gamma+y,J;\tau,N,o_N)(R),$$
$$Z(\gamma+1,J;\tau,N,o_N)(R)= \epsilon(J,N) Z(\gamma,J;\tau,N,o_N)(R),$$
$$Z(\gamma+\tau,J;\tau,N,o_N)(R)= \epsilon(J,N) Z(\gamma,J;\tau,N,o_N)(R).$$

\end{lemma}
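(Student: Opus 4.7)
The plan is to reduce all three identities to the explicit product formula \eqref{eq:FundR}
$$Z(\gamma,J;\tau,N,o_N)(R)=\nu(o_N,\Delta^+)\prod_{a\in \Delta^+}\Phi_1(a\gamma+r_a,\tau)$$
established in the previous lemma, and then to invoke the quasi-periodicity properties of $\Phi_1$ recorded earlier, namely $\Phi_1(z+1,\tau)=-\Phi_1(z,\tau)$ and $\Phi_1(z+\tau,\tau)=-\Phi_1(z,\tau)$. This turns the statement into elementary book-keeping with signs.

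For the first identity, I would not even need the product formula: the definition directly gives
$$Z(\gamma,J;\tau,N,o_N)(yJ+R)=Z(\tau,N,o_N)(\gamma J+yJ+R)=Z(\tau,N,o_N)((\gamma+y)J+R),$$
which equals $Z(\gamma+y,J;\tau,N,o_N)(R)$ by the identity displayed just before the lemma. (Note that $yJ+R$ still lies in $\so(N,J)$ since $J$ commutes with itself, so the expression makes sense.)

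For the second identity, diagonalize $R$ on the decomposition $N\otimes_\R\C=\oplus_{a\in\Delta}N_a$ and use \eqref{eq:FundR}. Shifting $\gamma\mapsto \gamma+1$ replaces each factor $\Phi_1(a\gamma+r_a,\tau)$ by $\Phi_1(a\gamma+r_a+a,\tau)$. Applying the relation $\Phi_1(z+1,\tau)=-\Phi_1(z,\tau)$ exactly $a$ times (with $a\in\Z$) introduces a sign $(-1)^a$ in the $a$-th factor, so the total sign is $(-1)^{\sum_{a\in\Delta^+}a}=(-1)^{\rho(J,\Delta^+)}=\epsilon(J,N)$ by \eqref{eq:epsiloJ}. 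The identity for $\gamma\mapsto \gamma+\tau$ is identical, using instead $\Phi_1(z+\tau,\tau)=-\Phi_1(z,\tau)$.

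The only mild subtlety — and the point worth checking carefully — is that the signs produced by each factor really multiply to $\epsilon(J,N)$ rather than leaving a $\nu$-dependent discrepancy; but since $\nu(o_N,\Delta^+)$ is a common prefactor that is untouched by the shift in $\gamma$, and since by \eqref{eq:epsiloJ} the quantity $(-1)^{\rho(J,\Delta^+)}$ is independent of the choice of $\Delta^+$, the computation goes through cleanly. Granting arithmetic care with these signs, the three identities follow essentially by inspection.
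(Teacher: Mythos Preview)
Your proof is correct and follows essentially the same approach as the paper: the first identity is read off from the definition (the paper phrases it via $e^{\gamma J}e^{yJ+R}=e^{(\gamma+y)J}e^R$, which amounts to the same thing), and the second and third are obtained by diagonalizing $R$, applying the product formula \eqref{eq:FundR}, and using the quasi-periodicity of $\Phi_1$ to collect the sign $(-1)^{\sum_{a\in\Delta^+}a}=\epsilon(J,N)$.
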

\begin{proof}
The first formula is obvious since $e^{\gamma J}e^{yJ+R}=e^{(\gamma+y)J}e^R$
if $R$ commute with $J$.

It is enough to prove the second formula
when $R$ respects the decomposition $N\otimes_\R \C=\oplus_{a\in \Delta^+} N_a$.
We use Formula \ref{eq:FundR}:
$$Z(\gamma+1,J; \tau,N,o_N)(R)=\nu(o_N,\Delta^+)\prod_{a\in \Delta^+}
\Phi_1(a\gamma+r_a+a,\tau)$$
$$=\nu(o_N,\Delta^+)(-1)^{\sum_{\Delta^+} a}\prod_{a\in \Delta^+}
\Phi_1(a\gamma+r_a,\tau)$$
using the periodic behavior with respect to the lattice $L_\tau$ of $\Phi_1$.
The last formula is proved in the same way.
\end{proof}

Assume $J$ is invertible.
 Consider an integer $k$ such that $\exp(J/k)={\rm Id}_N$.
Then the rotation numbers of $J$ are multiple of $k$, so we can define
$\epsilon(J/k,N)=(-1)^{\sum_{\Delta^+}a/k}$.

\begin{proposition}\label{pro:indgammak}
Assume $J$ is invertible.
 Consider an integer $k$ such that $\exp(J/k)={\rm Id}_N$.
Assume $k\gamma=\alpha+\beta\tau $, with $\alpha,\beta\in \Z$.
 Then, for $R\in \so(N,J)$,
  $$Z(\gamma,J;\tau,N,o_N)(R)= (\epsilon(J/k,N))^{\alpha+\beta}Z(\tau,N,o_N)(R).$$
 \end{proposition}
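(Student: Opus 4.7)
The plan is to reduce the statement to a direct computation using Formula \eqref{eq:FundR} and the transformation properties of $\Phi_1$ under the lattice $L_\tau$.

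First, I would use the identity $Z(\gamma,J;\tau,N,o_N)(R) = Z(\tau,N,o_N)(\gamma J + R)$ noted just before the lemma, so that both sides of the desired equality are values of the same invariant function $Z(\tau,N,o_N)$ on commuting elements of $\so(N)$. Since $R \in \so(N,J)$ commutes with $J$, both $R$ and $\gamma J + R$ preserve the decomposition $N\otimes_\R\C = \oplus_{a \in \Delta} N_a$. By $SO(N)$-invariance of $Z(\tau,N,o_N)$, I may assume $R$ is diagonal in this decomposition, acting as $2i\pi r_a$ on $N_a$ (with $r_{-a} = -r_a$). Then $\gamma J + R$ acts as $2i\pi(a\gamma + r_a)$ on $N_a$.

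Next, I apply Formula \eqref{eq:FundR} on both sides using the \emph{same} choice of $\Delta^+$ and thus the same sign $\nu(o_N,\Delta^+)$:
\begin{equation*}
Z(\gamma,J;\tau,N,o_N)(R) = \nu(o_N,\Delta^+)\prod_{a\in\Delta^+}\Phi_1(a\gamma + r_a,\tau),
\end{equation*}
\begin{equation*}
Z(\tau,N,o_N)(R) = \nu(o_N,\Delta^+)\prod_{a\in\Delta^+}\Phi_1(r_a,\tau).
\end{equation*}
Since $\exp(J/k) = \mathrm{Id}_N$, every rotation number $a$ of $J$ is a multiple of $k$; write $a = ka'$ with $a' \in \Z$. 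Then $a\gamma = a'(\alpha + \beta\tau) \in L_\tau$, and the quasi-periodicity relations $\Phi_1(z+1,\tau) = -\Phi_1(z,\tau)$ and $\Phi_1(z+\tau,\tau) = -\Phi_1(z,\tau)$ give, applied $a'\alpha$ and $a'\beta$ times respectively,
\begin{equation*}
\Phi_1(a\gamma + r_a,\tau) = (-1)^{a'(\alpha+\beta)}\,\Phi_1(r_a,\tau).
\end{equation*}

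Taking the product over $\Delta^+$, the sign collected is $(-1)^{(\alpha+\beta)\sum_{a\in\Delta^+} a/k}$, which by definition of $\epsilon(J/k,N)$ equals $\epsilon(J/k,N)^{\alpha+\beta}$. This gives the claimed identity. The only subtle point is confirming that the auxiliary sign $\nu(o_N,\Delta^+)$ cancels between the two expressions; this works because the same $\Delta^+$ (coming from the $J$-decomposition) is used to evaluate both sides, and $\nu$ depends only on $(o_N,\Delta^+)$, not on $\gamma$. Everything else is a direct bookkeeping of the double quasi-periodicity of the Jacobi sine.
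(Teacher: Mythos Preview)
Your proof is correct and follows essentially the same route as the paper: write each rotation number as $a=kA_a$, apply Formula~\eqref{eq:FundR}, and use the quasi-periodicity of $\Phi_1$ under $L_\tau$ to collect the sign $(-1)^{(\alpha+\beta)\sum_{a\in\Delta^+}a/k}=\epsilon(J/k,N)^{\alpha+\beta}$. You simply spell out in more detail the diagonalization step and the cancellation of $\nu(o_N,\Delta^+)$ that the paper leaves implicit.
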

 \begin{proof}
 The rotation numbers of $J$ are $a=kA_a$ with $A_a\in \Z$.
 With same notations as in the preceding proof,
 $Z(\gamma,J;\tau,N,o_N)(R)$ is equal to
 $$\nu(o_N,\Delta^+)\prod_{a\in \Delta^+}
 \Phi_1(a\gamma+r_a,\tau)=\nu(o_N,\Delta^+)\prod_{a\in \Delta^+}
 \Phi_1(r_a+A_a(k\gamma),\tau).$$
 Since $A_a(k\gamma)=A_a \alpha+\tau A_a\beta\in L_\tau$,
 using again the periodic property of $\Phi_1$, we obtain the proposition.
\end{proof}

 We give similar formulae with $\exp(J)={\rm Id}_N$, but $\exp(J/k)=-{\rm Id}_{N}$.
 So $k$ is even.
Since $\exp(J/k)=-{\rm Id}_N$, we can define the sign ${\rm Os}(J/k,o_N)$
(Definition \ref{def:Ospin}).

\begin{proposition}\label{pro:allW}
Let $k$ even, and $\exp(J/k)=-{\rm Id}_{N}$.
Let  $k\gamma=\alpha+\beta \tau$ with $\alpha,\beta\in \Z$.

1) If $\alpha$ and $\beta$  are both even,
 then
 $$Z(\gamma,J;\tau,N,o_N)(R)=c_1(\gamma) ({\rm Os}(J/k,o_N))^{\alpha+\beta}
 \frac{1}{{\rm Str}(e^R,S_N,o_N)}
{\rm Tr}(e^R, W_{1,q}(N\otimes_\R \C))$$
with $c_1(\gamma)=(-1)^{(\alpha+\beta) (\dim N)/4}.$

2)  If $\alpha$ is odd, and $\beta$ even,
 then
 $$Z(\gamma,J;\tau,N,o_N)(R)=c_2(\gamma)({\rm Os}(J/k,o_N))^{\alpha+\beta}
 \frac{1}{{\rm Tr}(e^R,S_N)}
{\rm Tr}(e^R, W_{2,q}(N\otimes_\R \C)),$$
with $c_2(\gamma)=i^{\dim N/2}(-1)^{(\alpha+\beta-1)(\dim N)/4}.$

 3)  If $\alpha$ is even, and $\beta$ odd,
 then
 $$Z(\gamma,J;\tau,N,o_N)(R)=c_3(\gamma)({\rm Os}(J/k,o_N))^{\alpha+\beta}
 {\rm Tr}(e^R,S_N)
{\rm Tr}(e^R, W_{3,q}(N\otimes_\R \C)),$$
with $c_3(\gamma)=q^{\dim N/2}(-1)^{(\alpha+\beta-1) (\dim N)/4}$.

4) If $\alpha,\beta$ are both odd, then
$$Z(\gamma,J;\tau,N,o_N)(R)=c_4(\gamma) ({\rm Os}(J/k,o_N))^{\alpha+\beta}
{\rm Str}(e^R,S_N,o_N)
{\rm Tr}(e^R, W_{4,q}(N\otimes_\R \C)),$$
with $c_4(\gamma)=(iq)^{\dim N/2}(-1)^{(\alpha+\beta) (\dim N)/4}$.

\end{proposition}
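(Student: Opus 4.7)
The plan is to compute $Z(\gamma, J; \tau, N, o_N)(R)$ directly from Formula~\eqref{eq:FundR}, which expresses it as $\nu(o_N, \Delta^+)\prod_{a\in\Delta^+}\Phi_1(a\gamma+r_a, \tau)$, by reducing each argument $a\gamma+r_a$ modulo $L_\tau$ to a standard form and applying the four translation identities relating $\Phi_1$ to $\Phi_2, \Phi_3, \Phi_4$. The hypotheses $\exp(J) = \mathrm{Id}_N$ and $\exp(J/k) = -\mathrm{Id}_N$ (with $k$ even) force every rotation number $a$ of $J$ to satisfy $a/k \in \tfrac{1}{2} + \Z$; writing $a = k(\ell_a + \tfrac{1}{2})$ with $\ell_a \in \Z$, one gets $a\gamma = (2\ell_a+1)(\alpha+\beta\tau)/2$, whose class in $\C/L_\tau$ depends only on the parities of $\alpha$ and $\beta$ and is one of $0$, $1/2$, $\tau/2$, $1/2+\tau/2$ -- exactly the four cases.

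Next, using $\Phi_1(z+1/2, \tau) = i\Phi_2(z, \tau)$, $\Phi_1(z+\tau/2, \tau) = q^{1/4}\Phi_3(z, \tau)$, and $\Phi_1(z+1/2+\tau/2, \tau) = iq^{1/4}\Phi_4(z, \tau)$, together with the quasi-periodicity $\Phi_1(z+m+n\tau, \tau) = (-1)^{m+n}\Phi_1(z, \tau)$, I transform each factor $\Phi_1(a\gamma+r_a, \tau)$ into a factor $\Phi_i(r_a, \tau)$ multiplied by a sign depending on $\ell_a, \alpha, \beta$ and by constants built from $i$ and $q^{1/4}$. Collecting the products over $\Delta^+$ yields the global constants (powers of $i$ and $q$) that make up the claimed $c_i(\gamma)$, while the remaining $\prod_{a\in\Delta^+}\Phi_i(r_a, \tau)$ is identified with the supertrace/trace combinations of the proposition by comparing the explicit infinite-product definition of each $\Phi_i$ with the formulas for $\mathrm{Tr}(e^R, S_N)$, $\mathrm{Str}(e^R, S_N, o_N)$ (via \eqref{Tr}, \eqref{Str} applied to each $J$-invariant $2$-plane) and $\mathrm{Tr}(e^R, W_{i,q}(N\otimes_\R\C))$. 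Whether the spinor factor $(e^{-i\pi r_a}\pm e^{i\pi r_a})$ lies in the numerator or denominator of $\Phi_i$ determines whether the supertrace or the trace appears and with which sign, producing precisely the four different shapes in the statement.

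Finally, the $\ell_a$-dependent signs accumulated during the translation step must be repackaged into $\mathrm{Os}(J/k, o_N)^{\alpha+\beta}$. To identify this orientation factor, I compute $\exp(J/k)$ on each $J$-invariant $2$-plane $N_a \oplus N_{-a}$ using Formula~\eqref{sigmag}: the lift of $J/k$ to the Lie algebra of $\mathrm{Spin}(N_a \oplus N_{-a})$ is $\pi(\ell_a + \tfrac{1}{2})\,e_1^{(a)}e_2^{(a)}$, hence $\exp(J/k)|_{N_a \oplus N_{-a}} = (-1)^{\ell_a}e_1^{(a)}e_2^{(a)}$, whose top symbol gives the orientation $(-1)^{\sum_{\Delta^+}\ell_a}$ times the basis orientation defined by the chosen eigenframes of $J$. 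Combined with $\nu(o_N, \Delta^+)$ this yields $\mathrm{Os}(J/k, o_N)$, and one checks case by case -- using that $\alpha+\beta$ is even in cases (1) and (4) and odd in cases (2) and (3) -- that this sign raised to the power $\alpha+\beta$ coincides with the residual $\ell_a$-sign left over from Step 2. The main obstacle is precisely this sign bookkeeping: the lattice-translation signs, the factor $\nu(o_N, \Delta^+)$, the $\ell_a$-dependence, and the power $\alpha+\beta$ in $\mathrm{Os}$ must be shown to combine correctly in each of the four parity cases.
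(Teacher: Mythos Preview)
Your proposal is correct and follows essentially the same approach as the paper: both write the rotation numbers as $a=k(\tfrac12+A_a)$ (your $\ell_a$ is the paper's $A_a$), use Formula~\eqref{eq:FundR} together with the half-lattice translation identities for $\Phi_1,\Phi_2,\Phi_3,\Phi_4$, and identify ${\rm Os}(J/k,o_N)=(-1)^{\sum_{a\in\Delta^+}A_a}\,\nu(o_N,\Delta^+)$ via the two-dimensional computation of $\exp(J/k)$ in ${\rm Spin}(N)$. The paper's version is terser and leaves the final sign bookkeeping to the reader, whereas you spell out more of it; otherwise the arguments coincide.
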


When $\alpha+\beta$ is even, we could replace
$({\rm Os}(J/k,o_N))^{\alpha+\beta}$ by $1$, but we prefer to give identical
 formulae for all cases;

\begin{proof}
Write $N\otimes_\R \C=\oplus_{a\in \Delta} N_a$
and $Jv=2i\pi a v$ on $N_a$.
Since $\exp(J/k)=-{\rm Id}_N$,
the rotation numbers $a$ are equal to $k/2$ modulo $k$.
So we write for $a\in \Delta^+$, $a=k/2+k A_a$.
If $N$ is two dimensional with orthonormal basis $e_1, e_2$,
with $Je_1=(k/2+k A)e_2$,
 $Je_2=-(k/2+k A)e_1$, then in
${\rm Spin}(N)\subset {\rm Clifford}(N)$
we have $\exp(J/k)=(-1)^A e_1e_2$.
So $${\rm Os}(J/k,o_N)=(-1)^{\sum_{a\in \Delta^+} A_a}\nu(o_N,\Delta^+).$$

As before,
$$Z(\gamma,J;\tau,N,o_N)(R)=\nu(o_N,\Delta^+)
\prod_{a\in \Delta^+} \Phi_1(a\gamma+r_a,\tau).$$
Now
$a\gamma=k\gamma(1/2+A_a)=(\alpha+\beta \tau)(1/2+A_a).$
We then use the periodicity properties of $\Phi_1,\Phi_2,\Phi_3,\Phi_4$,
with respect to translation by  the lattice $\Z\frac{1}{2}+\Z \frac{\tau}{2}$
and we obtain Proposition \ref{pro:allW}

\end{proof}

An important consequence is the following.
\begin{proposition}
Let $(N,o_N)$ be an oriented even dimensional Euclidean vector  space.
Let $k$ be even and let $\gamma$ be an element of  order $k$ in $E_\tau$.
There exists  an analytic function  $R\mapsto {\rm EM}_{\epsilon}(\gamma;\tau, N,o_N)(R)$
  defined on $\so(N)$ in a neighborhood of $0$,
  {\bf invariant by $SO(N)$},
and such that
{\bf for any $J$ such that $\exp(J/k)=-{\rm Id}_N$},

$$Z(\gamma,J;\tau,N,o_N)(R)=({\rm Os}(J/k,o_N))^{\alpha+\beta}
{\rm EM}_{\epsilon}(\gamma;\tau,N,o_N)(R).$$

\end{proposition}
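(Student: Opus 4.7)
The plan is to reduce the statement directly to Proposition \ref{pro:allW}. The key preliminary observation is that because $\gamma$ has order \emph{exactly} $k$ in $E_\tau$, the integers $\alpha,\beta$ determined by $k\gamma=\alpha+\beta\tau$ cannot both be even: otherwise $(k/2)\gamma=(\alpha/2)+(\beta/2)\tau\in L_\tau$, forcing $\gamma$ to have order dividing $k/2$. Consequently only Cases 2, 3, 4 of Proposition \ref{pro:allW} can occur for our $\gamma$.

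In each of those three remaining cases, the identity in Proposition \ref{pro:allW} has the form
$$Z(\gamma,J;\tau,N,o_N)(R) = c_i(\gamma)\,({\rm Os}(J/k,o_N))^{\alpha+\beta}\,F_i(R),$$
where $F_i(R)$ is one of the three expressions
$\frac{1}{{\rm Tr}(e^R,S_N)}{\rm Tr}(e^R,W_{2,q}(N\otimes_\R\C))$,\;
${\rm Tr}(e^R,S_N)\,{\rm Tr}(e^R,W_{3,q}(N\otimes_\R\C))$,\;
or ${\rm Str}(e^R,S_N,o_N)\,{\rm Tr}(e^R,W_{4,q}(N\otimes_\R\C))$.
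Crucially, $F_i(R)$ contains no reference to $J$. I would therefore simply \emph{define}
$${\rm EM}_{\epsilon}(\gamma;\tau,N,o_N)(R) \;:=\; c_i(\gamma)\,F_i(R)$$
in the appropriate case; the desired equality then holds for every admissible $J$ by Proposition \ref{pro:allW}, and independence of the choice of $J$ is automatic.

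It remains to verify that ${\rm EM}_{\epsilon}$ is $SO(N)$-invariant and analytic at $R=0$. The first point is immediate: ${\rm Tr}(e^R,\cdot)$ and ${\rm Str}(e^R,\cdot,o_N)$ are class functions on $\so(N)$, so each $F_i$ is invariant under conjugation. For analyticity I would use that the infinite products defining ${\rm Tr}(e^R,W_{i,q}(N\otimes_\R\C))$ converge absolutely and uniformly for $R$ in a small neighborhood of $0$ (since $|q|<1$), and that all denominators that appear, namely $\prod_n\det_N(1\pm q^{\bullet}e^R)$ and, in Case 2, the scalar ${\rm Tr}(e^R,S_N)$, evaluate to nonzero constants at $R=0$ ($2^{\dim N/2}\neq 0$ for the latter).

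The main (and essentially only) obstacle is the opening observation ruling out Case 1: without the exact-order hypothesis, Case 1 of Proposition \ref{pro:allW} would apply and the analogue of $F_1(R)=\chi(N,o_N)(R)\,{\rm Tr}(e^R,W_{1,q}(N\otimes_\R\C))$ would fail to be analytic at $R=0$ because ${\rm Str}(e^R,S_N,o_N)$ vanishes there. Thus the statement is truly a statement about elements of the elliptic curve whose order forces one to be in one of the three ``odd-parity'' cases, and the proof consists of reading off the correct definition of ${\rm EM}_{\epsilon}$ from the relevant case.
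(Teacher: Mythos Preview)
Your proof is correct and follows essentially the same approach as the paper: rule out Case~1 of Proposition~\ref{pro:allW} using the exact-order hypothesis, then define ${\rm EM}_\epsilon$ case-by-case from the $J$-independent right-hand sides in Cases~2--4 and read off invariance and analyticity. Your justification of why $\alpha,\beta$ cannot both be even and your analyticity check are slightly more detailed than the paper's, but the argument is the same.
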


\begin{proof}
Let  $k\gamma=\alpha+\beta \tau$ with $\alpha,\beta\in \Z$.
If $\gamma$ is exactly of order $k$, the case where
$\alpha,\beta$ are both even does not occur since $k$ is even.

If    $\alpha$ is odd, and $\beta$ even,
define
 $${\rm EM}_\epsilon(\gamma;\tau,N,o_N)(R)=c_2(\gamma)
 \frac{1}{{\rm Tr}(e^R,S_N)}
{\rm Tr}(e^R, W_{2,q}(N\otimes_\R \C)).$$

 If    $\alpha$ is even, and $\beta$ odd,
 define
 $${\rm EM}_\epsilon(\gamma;\tau,N,o_N)(R)=c_3(\gamma){\rm Tr}(e^R,S_N)
{\rm Tr}(e^R, W_{3,q}(N\otimes_\R \C)).$$

If  $\alpha,\beta$ are both odd,
define
 $${\rm EM}_\epsilon(\gamma;\tau,N,o_N)(R)=c_4(\gamma){\rm Str}(e^R,S_N,o_N)
{\rm Tr}(e^R, W_{4,q}(N\otimes_\R \C)).$$

Then  the function $R\mapsto {\rm EM}_\epsilon(\gamma,\tau,N,o_N)(R)$ on $\so(N)$
is invariant by $SO(N)$ and analytic at $R=0$ (since
${\rm Tr}(e^R,S_N)=\dim(S_N)\neq 0$ for $R=0$).
Proposition \ref{pro:allW} implies the first assertion;

\end{proof}

 We now  define
${\rm EM}(\gamma,\zeta;\tau,N,o_N)$ when $\zeta\in SO(N)$
is such that $\zeta^k={\rm Id}_N$, but $\zeta$ does not have the eigenvalues $1$ or  $-1$.

Break $N\otimes_\R \C$ as a direct sum
$\oplus_{u\in F} N_u$
 with $F$
a set of $k$-roots of unity such that
$\zeta v=u v$ when
$v\in N_u$
(here $N_u$ is the eigenspace for the eigenvalue $u$, and might be of dimension $>1$). Consider a decomposition $F=F^+\cup F^-$, with $F^-=\{\overline u; u\in F^+\}$.
We write $F=F^+\cup F^-$ with $F^-=\{\overline{u}, u\in F^+\}$.
Choose $K\in\so(N)$,
 commuting with the action of $\zeta$,
and such that, for $u\in F^+$ and $v\in N_u$,
 $K v=2i\pi a_uv$, $a_u$ integers such that
 $u=e^{2i\pi a_u/k}$.
So $\zeta=\exp(K/k)$ in the group $SO(N)$.
{\bf We will say that such a  $K$ is adapted to $\zeta$}.
From our choice of $K$,  we see that a matrix $R$ commutes with $\zeta$
if and only if $R$ commutes with $K$.
Remark that  the choice of $K$ is not unique: we can change $a_u$ to $a_u+kA_u$
with $A_u$ integer.

\begin{definition}
Assume $(N,o_N)$ is an oriented even dimensional Euclidean vector space.
Let $\zeta\in  SO(N)$
such that $\zeta^k={\rm Id}_N$, and such that  $({\rm Id}\pm \zeta)$ are invertible
transformations of $N$.
Let $\gamma\in \C$ of order $k$ in $E_\tau$, and write
$k\gamma=\alpha+\beta \tau$, with $\alpha,\beta\in \Z$.
Let $J\in \so(N)$ commuting with $\zeta$ and such that $\exp(J/k)=\zeta$ and $R\in \so(N,J)$.
Define
$${\rm EM}(\gamma ,J;\tau,N,o_N)(R)={\rm Os}(J/k,o_N)^{\alpha+\beta}
Z(\gamma,J;\tau,N,o_N)(R).$$
\end{definition}

\begin{theorem}
If $\gamma$ is of order $k$ in $E_\tau$, then

$\bullet$  If $K$ is adapted to $\zeta$, the function
$R\mapsto {\rm EM}(\gamma,K;\tau,N,o_N)(R)$
on $\so(N,\zeta)=\so(N,K)$ defines  a germ at $0$  of an analytic   function on $\so(N,\zeta)$
invariant by the group $SO(N,\zeta)$.

$\bullet$ If $R\in \so(N,\zeta)$,
${\rm EM}(\gamma,K;\tau,N,o_N)(R)$
does not depend of the choice of $K$ adapted to $\zeta$.

$\bullet$
If $K$ is adapted to $\zeta$,
then for any $J$ commuting with $\zeta$ and such that $\exp(J/k)=\zeta$,
we have on $\so(N,J)\subset \so(N,\zeta)$
$${\rm EM}(\gamma,J;\tau,N,o_N)(R)={\rm EM}(\gamma,K;\tau,N,o_N)(R).$$
\end{theorem}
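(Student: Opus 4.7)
I will extract all three assertions from the explicit product formula \eqref{eq:FundR} combined with the quasi-periodicity $\Phi_1(z+m+n\tau,\tau)=(-1)^{m+n}\Phi_1(z,\tau)$ of the inverse Jacobi sine, matching every sign that appears with the corresponding sign from \eqref{eq:epsiloJ} about lifts to ${\rm Spin}(N)$.

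\textbf{First assertion.} I fix $K$ adapted to $\zeta$ and decompose $N\otimes_\R\C=\bigoplus_{u\in F}N_u$. Since $({\rm Id}\pm\zeta)$ are invertible, no eigenvalue $u\in F$ is $\pm 1$, so the integer rotation numbers $a_u$ of $K$ satisfy $a_u\not\equiv 0\pmod k$, i.e.\ $a_u/k\notin\Z$. Because $\gamma$ has exact order $k$ with $k\gamma=\alpha+\beta\tau$, this gives $a_u\gamma=(a_u/k)(\alpha+\beta\tau)\notin L_\tau$; since the polar set of $\Phi_1(\cdot,\tau)$ is $L_\tau$, each factor $\Phi_1(a_u\gamma+r_u,\tau)$ in \eqref{eq:FundR} is holomorphic near $r_u=0$. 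The $SO(N,\zeta)=SO(N,K)$-invariance is immediate, since $Z$ is defined by (super)traces in the commutant of $e^{\gamma K}$.

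\textbf{Second assertion.} If $K,K'$ are both adapted to $\zeta$, then in a common eigenbasis their rotation numbers satisfy $a'=a+kA_a$ for integers $A_a$, $a\in\Delta^+$. Plugging into \eqref{eq:FundR}, each shift $a\gamma\mapsto a\gamma+A_a(\alpha+\beta\tau)$ produces a factor $(-1)^{A_a(\alpha+\beta)}$ by quasi-periodicity of $\Phi_1$, yielding
\[
Z(\gamma,K';\tau,N,o_N)(R)=(-1)^{(\alpha+\beta)\sum_{a\in\Delta^+}A_a}\,Z(\gamma,K;\tau,N,o_N)(R).
\]
Meanwhile $K'-K=kJ_0$ with $J_0\in\so(N)$ commuting with $K$ and having rotation numbers $A_a$, so $\exp(K'/k)=\exp(K/k)\exp(J_0)$ in ${\rm Spin}(N)$, and \eqref{eq:epsiloJ} gives $\exp(J_0)=(-1)^{\sum_{a\in\Delta^+}A_a}$ there. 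Hence ${\rm Os}(K'/k,o_N)=(-1)^{\sum A_a}{\rm Os}(K/k,o_N)$; raising to $(\alpha+\beta)$ produces exactly the sign needed to cancel the previous one in the definition of ${\rm EM}$.

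\textbf{Third assertion and main delicacy.} For a general $J$ commuting with $\zeta$ with $\exp(J/k)=\zeta$, $J$ preserves each $N_u$ with eigenvalues in $2i\pi a_u+2i\pi k\Z$, so $J_0:=(J-K)/k\in\so(N)$ commutes with $K$ and has integer rotation numbers. For $R\in\so(N,J)$, $R$ commutes with $\exp(J/k)=\zeta$, hence with $K$, hence with $kJ_0=J-K$, so $R,K,J_0$ admit a simultaneous eigenbasis and the computation of the second assertion applies verbatim. The only real delicacy in the whole proof is this double bookkeeping of signs: the quasi-periodicity factor from $\Phi_1$ matches exactly the sign $\epsilon(J_0,N)^{\alpha+\beta}$ coming from the lift to ${\rm Spin}(N)$, and this matching is precisely the reason the twist ${\rm Os}(J/k,o_N)^{\alpha+\beta}$ was inserted into the definition of ${\rm EM}$.
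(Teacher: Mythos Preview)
Your proof is correct and takes essentially the same approach as the paper: reduce to the product formula \eqref{eq:FundR}, apply the quasi-periodicity of $\Phi_1$ to produce the sign $(-1)^{(\alpha+\beta)\sum A_a}$, and cancel it against the sign in ${\rm Os}(J/k,o_N)$ computed via the lift to ${\rm Spin}(N)$. The only differences are organizational (the paper establishes the third bullet directly and gets the second as a special case, whereas you do the second first) and a minor imprecision in your citation of \eqref{eq:epsiloJ} for the value of $\exp(J_0)$ in ${\rm Spin}(N)$ (that equation merely defines $\epsilon$, and $J_0$ need not be invertible there), though the intended Clifford computation is exactly the one the paper performs.
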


\begin{proof}
First if $\gamma$ is of order  $k$,
we have $a\gamma\notin L_\tau$ for all rotation numbers of $K$.
Indeed $a$ is relatively prime to $k$, since ${\rm I}-\zeta$ is invertible.
So the function $R\mapsto Z(\gamma,K;\tau,N,o_N)(R)$ is well defined at $R=0$.
It is clearly invariant under $SO(N,K)$.
So if $K$ is adapted to $\zeta$, it is invariant by $SO(N,\zeta)$.

Now  take  $J$ commuting with $\zeta$ and such that $\exp(J/k)=\zeta$.
Write
$$Z(\gamma,J;\tau,N,o_N)(R)=
\nu(\Delta^+,o_N)\prod_{a\in \Delta^+} \Phi_1(a\gamma+r_a,\tau).
$$

For $u\in F^+$, write $N_u=\oplus_{a\in \Delta_u^+} N_a$,
and $J v=a v$ on $N_a$.
Then we have $a=a_u+k A_a$, if $a\in \Delta_u^+$,
with $A_a\in \Z$.
Let $\Delta^+=\cup_{u\in F^+}\Delta_u^+$ (as multilists)
so $\rho(K,\Delta^+)-\rho(J,\Delta^+)$ is an integer  divisible by $k$.
We have
$${\rm Os}(J/k,o_N)=
{\rm Os}(K/k,o_N) (-1)^{\frac{1}{k}
(\rho(J,\Delta_+)-\rho(K,\Delta_+))}.$$

Write $a\gamma=a_u\gamma+A_a (\alpha+\beta \tau)$.
Then, using the invariance property of $\Phi_1$, we obtain that
$$Z(\gamma,J;\tau,N,o_N)(R)=(-1)^{(\alpha+\beta)\frac{
\rho(J,\Delta_+)-\rho(K,\Delta_+)}{k}}Z(\gamma,K;\tau,N,o_N)(R).$$
So $${\rm EM}(\gamma,J;\tau,N,o_N)(R)={\rm EM}(\gamma,K;\tau,N,o_N)(R).$$
\end{proof}

\begin{definition}
Assume $(N,o_N)$  is an oriented even dimensional Euclidean vector space.
Let $\zeta\in  SO(N)$
such that $\zeta^k={\rm Id}_N$, and such that ${\rm Id}_N\pm \zeta$ are invertible transformations
 on $N$. Let $\gamma$ be of order $k$ in $E_\tau$.
 We define the function ${\rm EM}(\gamma,\zeta;\tau,N,o_N)$ on $\so(N,\zeta)$ by
 $${\rm EM}(\gamma,\zeta;\tau,N,o_N)(R)={\rm Os}(K/k,o_N)^{\alpha+\beta}
Z(\gamma,K;\tau, N,o_N)(R)$$  for any $K$ adapted to $\zeta$.
\end{definition}

Let us now consider $\zeta\in SO(N)$ such that $\zeta^k={\rm Id}_N$, and $({\rm Id}_N-\zeta)$ invertible.
Write $N=N_B\oplus N_G$,
the decomposition invariant by $\zeta$ and such that
$\zeta$ acts by $-{\rm Id}$ on $N_B$ and ${\rm Id}\pm \zeta$ invertible on
$N_G$.

\begin{definition}
Let $\gamma$ be of order $k$ in $E_\tau$. We define the function ${\rm EM}(\gamma,\zeta;\tau,N,o_N)$ on $\so(N,\zeta)$ by
$${\rm EM}(\gamma,\zeta;\tau,N,o_{N_B}\wedge o_{N_G})=
{\rm EM}_\epsilon(\gamma;\tau,N_B,o_{N_B}) {\rm EM}(\gamma,\zeta;{N_G},o_{N_G}).$$
\end{definition}

The following theorem of independence is a corollary of the preceding discussion.
\begin{theorem}\label{theo:wantindJ}
Let $(N,o_N)$  be an oriented even dimensional Euclidean vector space.
Let $\gamma$ of order $k$ in $E_\tau$ and write
$k\gamma=\alpha+\beta \tau$, with $\alpha,\beta\in \Z$.
Let $\zeta\in  SO(N)$
such that $\zeta^k={\rm Id}_N$ and ${\rm Id}_N-\zeta$ invertible.
Let $J\in \so(N)$ commuting with $\zeta$
and such that $\zeta=\exp(J/k)$.
Then for $R\in \so(N,J)$
$$\frac{1}{{\rm Str}(e^{\gamma J}e^R,S_N,o_N)}
 {\rm Tr}(e^{\gamma J}e^R,W_{1,q}(N\otimes_\R \C))$$
 is equal to
$${\rm Os}(J/k,o_N)^{\alpha+\beta}
{\rm EM}(\gamma,\zeta;\tau,N,o_N)(R).$$
\end{theorem}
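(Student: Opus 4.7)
The plan is to observe that the theorem is essentially a bookkeeping result: both sides of the claimed identity are multiplicative under the canonical $\zeta$-invariant decomposition $N = N_B \oplus N_G$ (where $\zeta = -\mathrm{Id}$ on $N_B$ and $\mathrm{Id}_N \pm \zeta$ is invertible on $N_G$), and on each summand the identity is already contained in what has been proved.

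First I would reduce to the decomposition $N = N_B \oplus N_G$. Writing $J = J_B \oplus J_G$ and $R = R_B \oplus R_G$ accordingly, the spinor module factorizes as $S_N = S_{N_B} \otimes S_{N_G}$, the Witten series factorize as $W_{1,q}(N \otimes_{\R} \C) = W_{1,q}(N_B \otimes_{\R} \C) \otimes W_{1,q}(N_G \otimes_{\R} \C)$, and $e^{\gamma J} e^R$ acts diagonally on each tensor factor. Multiplicativity of traces and supertraces therefore yields
\begin{equation*}
Z(\gamma,J;\tau,N,o_{N_B}\wedge o_{N_G})(R) = Z(\gamma,J_B;\tau,N_B,o_{N_B})(R_B)\cdot Z(\gamma,J_G;\tau,N_G,o_{N_G})(R_G).
\end{equation*}
A parallel factorization for $\mathrm{Os}(J/k,o_{N_B}\wedge o_{N_G}) = \mathrm{Os}(J_B/k,o_{N_B})\cdot \mathrm{Os}(J_G/k,o_{N_G})$ follows from the fact that the top-degree symbol of $\exp(J/k)$ in $\mathrm{Clifford}(N)$ is the wedge of the corresponding symbols on the two blocks. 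Finally, by definition, $\mathrm{EM}(\gamma,\zeta;\tau,N,o_{N_B}\wedge o_{N_G})$ factorizes as $\mathrm{EM}_\epsilon(\gamma;\tau,N_B,o_{N_B}) \cdot \mathrm{EM}(\gamma,\zeta;\tau,N_G,o_{N_G})$.

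Next I would treat the two blocks separately. On $N_B$, the condition $\zeta = -\mathrm{Id}$ gives $\exp(J_B/k) = -\mathrm{Id}_{N_B}$, which is precisely the hypothesis of Proposition \ref{pro:allW}. That proposition furnishes the desired identity on $N_B$:
\begin{equation*}
Z(\gamma,J_B;\tau,N_B,o_{N_B})(R_B) = \mathrm{Os}(J_B/k,o_{N_B})^{\alpha+\beta}\ \mathrm{EM}_\epsilon(\gamma;\tau,N_B,o_{N_B})(R_B),
\end{equation*}
because the four cases of Proposition \ref{pro:allW} are exactly the four cases encoded in the definition of $\mathrm{EM}_\epsilon$. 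On $N_G$, where $\mathrm{Id}_{N_G}\pm\zeta$ is invertible and $\gamma$ still has order $k$, the preceding theorem says that $\mathrm{EM}(\gamma,J_G;\tau,N_G,o_{N_G})(R_G)$ depends only on $\zeta = \exp(J_G/k)$ (and coincides with $\mathrm{EM}(\gamma,\zeta;\tau,N_G,o_{N_G})(R_G)$ for any adapted $K$). Unwinding the definition $\mathrm{EM}(\gamma,J_G;\tau,N_G,o_{N_G})(R_G) = \mathrm{Os}(J_G/k,o_{N_G})^{\alpha+\beta} Z(\gamma,J_G;\tau,N_G,o_{N_G})(R_G)$ and using $\mathrm{Os}(\cdot)^{2(\alpha+\beta)} = 1$ produces the analogous identity on $N_G$.

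Multiplying the two block identities and applying the multiplicativity formulas from the first step yields the theorem. The only delicate point — and hence the main obstacle I would spend care on — is the orientation bookkeeping in the passage between $\mathrm{Os}(J/k,o_N)$ and the product $\mathrm{Os}(J_B/k,o_{N_B})\cdot \mathrm{Os}(J_G/k,o_{N_G})$: one needs that the symbol $\sigma(\exp(J/k))$ factors consistently with the chosen orientation $o_N = o_{N_B}\wedge o_{N_G}$ (cf.\ Remark \ref{rem:orgY}), and that this factorization is compatible with the sign ambiguities coming from $\alpha,\beta$ having different parities on each block — which does not occur here because $\alpha,\beta$ depend only on $\gamma$ and $k$, not on the block.
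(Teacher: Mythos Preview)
Your proof is correct and is precisely the unpacking of what the paper intends: the theorem is stated there as ``a corollary of the preceding discussion,'' and the preceding discussion consists exactly of the proposition defining ${\rm EM}_\epsilon$ on $N_B$ (built from Proposition~\ref{pro:allW}) together with the independence theorem for ${\rm EM}(\gamma,\zeta;\tau,N_G,o_{N_G})$ on the block where ${\rm Id}\pm\zeta$ is invertible, combined via the multiplicative definition of ${\rm EM}(\gamma,\zeta;\tau,N,o_{N_B}\wedge o_{N_G})$. Your orientation bookkeeping is the right point to single out; note that the paper's definition of ${\rm EM}$ on $N$ is literally given only for orientations of the form $o_{N_B}\wedge o_{N_G}$, which is exactly the compatibility you need.
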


Let $\zeta\in SO(N)$ such that $\zeta^k={\rm Id}_N$ and $({\rm Id}_N-\zeta)$ invertible.
Let $o_N$ an orientation on $N$.
We consider the unique $\hat \zeta$ above $\zeta$ in ${\rm Spin}(N)$
and such that the orientation given by $\hat \zeta$ coincide with
$o_N$.
Then $\hat \zeta^k$ is $\pm 1$ in ${\rm Clifford}(N)$.
We define $v(\zeta,k;N,o_N,k)=\hat \zeta^k$.

Then we have the following periodicity property of functions on $\so(N,\zeta)$.
\begin{lemma}\label{lem:gammaplusone}
$${\rm EM}(\gamma+1,\zeta;\tau,N,o_{N})=v(\zeta,k; N,o_N){\rm EM}(\gamma,\zeta;\tau,N,o_N).$$
$${\rm EM}(\gamma+\tau,\zeta;\tau,N,o_{N})=v(\zeta,k; N,o_N)
{\rm EM}(\gamma,\zeta;\tau, N,o_N).$$
\end{lemma}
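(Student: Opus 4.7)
The plan is to unwind the definition of ${\rm EM}(\gamma,\zeta;\tau,N,o_N)$ through the $\zeta$-invariant orthogonal decomposition $N=N_B\oplus N_G$, check the periodicity on each summand separately, and reassemble using the multiplicativity of the sign $v(\zeta,k;\cdot,\cdot)$ on orthogonal direct sums of $\zeta$-invariant spin subspaces. I will spell out the shift $\gamma\to\gamma+1$; the case $\gamma\to\gamma+\tau$ is identical, using the second line of Lemma~\ref{lem:Zperiodic}.

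On the good part $N_G$, pick $K$ adapted to $\zeta|_{N_G}$, so that $\exp(K/k)=\zeta|_{N_G}$ in $SO(N_G)$ and $\exp K={\rm Id}_{N_G}$. The exponent of ${\rm Os}(K/k,o_{N_G})$ in the definition of ${\rm EM}$ jumps by $k$ because $k(\gamma+1)=(\alpha+k)+\beta\tau$, while Lemma~\ref{lem:Zperiodic} (applicable since $\exp K={\rm Id}$) multiplies $Z(\gamma,K;\tau,N_G,o_{N_G})(R)$ by $\epsilon(K,N_G)$. Thus the total periodicity factor on $N_G$ is ${\rm Os}(K/k,o_{N_G})^{k}\,\epsilon(K,N_G)$. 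To identify this with $v(\zeta,k;N_G,o_{N_G})=\hat\zeta^{k}$, I compare the two lifts of $\zeta|_{N_G}$ in ${\rm Spin}(N_G)$: by the definitions of ${\rm Os}$ and $\hat\zeta$, one has $\exp(K/k)={\rm Os}(K/k,o_{N_G})\,\hat\zeta|_{N_G}$, so $\exp K={\rm Os}(K/k,o_{N_G})^{k}\,\hat\zeta^{k}$. On the other hand, the two-dimensional model computation of Section~\ref{sec:spinors} gives $\exp K=\epsilon(K,N_G)$ inside ${\rm Clifford}(N_G)$, whence $\hat\zeta^{k}={\rm Os}(K/k,o_{N_G})^{k}\,\epsilon(K,N_G)$, as needed.

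On the bad part $N_B$, on which $\zeta$ acts by $-{\rm Id}$, one must have $k$ even. The shift $\alpha\to\alpha+k$ then preserves both parities, so the case index $i\in\{2,3,4\}$ of Proposition~\ref{pro:allW} is unchanged (case~$1$ is excluded by the assumption that $\gamma$ has exact order $k$). Inspection of $c_2(\gamma),c_3(\gamma),c_4(\gamma)$ shows that each picks up the common factor $(-1)^{k\dim(N_B)/4}$ under $\gamma\mapsto\gamma+1$. In parallel, $\hat\zeta|_{N_B}$ is the volume form $\omega=e_1\cdots e_{\dim N_B}$ in an $o_{N_B}$-oriented orthonormal basis, satisfying $\omega^{2}=(-1)^{\dim N_B/2}$ in ${\rm Clifford}(N_B)$; hence $v(\zeta|_{N_B},k;N_B,o_{N_B})=\omega^{k}=(-1)^{k\dim N_B/4}$, matching the constant shift.

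Multiplying the two periodicity factors and using the multiplicativity of the spin lift (and hence of its $k$-th power) under $N=N_B\oplus N_G$ yields the total periodicity factor $v(\zeta|_{N_B},k;N_B,o_{N_B})\cdot v(\zeta|_{N_G},k;N_G,o_{N_G})=v(\zeta,k;N,o_N)$, which is the claim. The main technical obstacle is the sign bookkeeping on $N_B$: one must verify that the single factor $(-1)^{k\dim N_B/4}$ emerges uniformly in the three surviving cases of Proposition~\ref{pro:allW}, and that the $\pm 1$ ambiguity in factoring $\hat\zeta$ as $\hat\zeta|_{N_B}\,\hat\zeta|_{N_G}$ inside ${\rm Spin}(N)$ is invisible to $v(\zeta,k;N,o_N)$—this holds because the ambiguity enters with exponent $k$, which is even whenever $N_B\neq 0$.
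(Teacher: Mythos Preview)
Your proof is correct, and on the summand $N_G$ it is essentially the paper's own argument: the identity $\exp(K/k)={\rm Os}(K/k,o_{N_G})\,\hat\zeta$ together with $\exp K=\epsilon(K,N_G)$ in ${\rm Spin}(N_G)$ is exactly the content of the paper's two-dimensional check.

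Where you differ is in the handling of $N_B$. The paper does not split $N=N_B\oplus N_G$ at all here: it invokes Theorem~\ref{theo:wantindJ} on the whole of $N$ to write ${\rm EM}(\gamma,\zeta;\tau,N,o_N)(R)={\rm Os}(J/k,o_N)^{\alpha+\beta}Z(\gamma,J;\tau,N,o_N)(R)$ uniformly (that theorem already absorbs the $N_B\oplus N_G$ decomposition), reduces the periodicity to the single sign identity ${\rm Os}(J/k,o_N)^{k}\epsilon(J,N)=v(\zeta,k;N,o_N)$, and verifies this by one two-dimensional computation with rotation number $a\in\{1,\dots,k-1\}$, which covers the $-1$ eigenspace as the case $a=k/2$. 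You instead go back to the explicit constants $c_2,c_3,c_4$ of Proposition~\ref{pro:allW} on $N_B$ and match them against $\omega^k$. Your route is a bit longer and more case-based, but it is self-contained from the definitions and avoids appealing to Theorem~\ref{theo:wantindJ}; the paper's route is shorter precisely because that theorem has already done the bookkeeping. One minor remark: your final caveat about a $\pm 1$ ambiguity in factoring $\hat\zeta$ is unnecessary, since with $o_N=o_{N_B}\wedge o_{N_G}$ the factorization $\hat\zeta=\hat\zeta|_{N_B}\,\hat\zeta|_{N_G}$ is already pinned down by the orientation condition on each piece.
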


\begin{proof}
Choosing $J$ commuting with $R$, and such that $\exp(J/k)=\zeta$, we use
$${\rm EM}(\gamma,\zeta;\tau,N,o_N)(R)={\rm Os}(J/k,o_N)^{\alpha+\beta}
Z(\gamma,J;\tau,N,o_N)(R).$$

If $\gamma$ is changed in $\gamma+1$ or in $\gamma+\tau$,
then $\alpha+\beta$ is changed in $\alpha+\beta+k$
and  $Z(\gamma+1;J,\tau,N,o_N)(R)=\epsilon(J,N)Z(\gamma,J;\tau,N,o_N)(R).$

So we have to prove that
${\rm Os}(J/k,o_N)^{k}\epsilon(J,N)=v(\zeta,k;N,o_N)$.
We see that this equality is independent of the choice of $J$.
Now if $N$ is two dimensional with orientation $e_1\wedge e_2$,
we choose $Je_1=(2\pi a) e_2$, $J e_2=-(2\pi a)e_1$, with $a\in [1,2,\ldots,k-1]$.
Then $\hat \zeta=\cos(\pi a/k)+\sin(\pi a/k) e_1e_2$
since $\sin(\pi a/k)>0$, and ${\rm Os}(J/k,o_N)=1$.
 Since $\hat \zeta^k=(-1)^a$, we obtain our lemma.
 \end{proof}

\bigskip

We now prove the analog of Proposition \ref{pro:MAINformulaK} in the elliptic context.

Let   $N$  be an  even dimensional Euclidean vector   space and $\zeta\in SO(N)$,
such that $\zeta^k={\rm Id}_N$.
We write $$N=N_0\oplus N_1$$ the $\zeta$-invariant decomposition
 such that  $\zeta$ acts by
the identity on $N_0$, and $({\rm Id}-\zeta)$ is invertible on $N_1$.

\begin{theorem}(The transfer formula)\label{th:MainformulaElliptic}
Let $k\geq 1$ be an integer.
Let   $N$  be an  even dimensional Euclidean vector   space and let $\zeta\in SO(N)$
such that $\zeta^k={\rm Id}_N$
and let  $\gamma\in \C$ of order $k$ in $E_\tau$.

Let $J\in \so(N)$ commuting with $\zeta$ and such that $\exp(J/k)=\zeta$.
Write $J=J_0\oplus J_1$ in the decomposition $N_0\oplus N_1$.
Choose orientations $o_{N_0},o_{N_1},o_N$ on $N_0,N_1,N$.
Then, for $R\in \so(N,J)$, and $y\in \C$ sufficiently small:
$$Z(\tau,N_0,o_{N_0})(yJ+R){\rm EM}(\gamma,\zeta;\tau,N_1,o_{N_1})(yJ+R)$$
$$=\epsilon \, Z(\gamma+y,J;\tau,N,o_{N})(R)$$
with $$\epsilon={\rm Os}(J_1/k,o_{N_1})^{\alpha+\beta}\epsilon(J_0/k,N_0)^{\alpha+\beta}
o_N/(o_{N_0}o_{N_1}).$$
\end{theorem}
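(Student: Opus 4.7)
The strategy is to reduce both sides to the same product of $Z$-functions on the factors $N_0$ and $N_1$ by unpacking all definitions, and then to match the signs. Structurally this is the elliptic analogue of Proposition \ref{pro:MAINformulaK}; the underlying mechanism is the tensor factorization $S_N = S_{N_0}\otimes S_{N_1}$ together with $W_{1,q}(N\otimes_{\R}\C) = W_{1,q}(N_0\otimes_{\R}\C)\otimes W_{1,q}(N_1\otimes_{\R}\C)$ (as $J$-modules) and multiplicativity of the trace / super-multiplicativity of the supertrace, with the extra ingredient that the ${\rm Os}$-sign must be tracked through the definition of ${\rm EM}$ on $N_1$.

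First I would expand the right-hand side. Writing $R=R_0\oplus R_1$ and using the tensor decomposition above on $e^{(\gamma+y)J+R} = e^{(\gamma+y)J_0+R_0}\otimes e^{(\gamma+y)J_1+R_1}$, multiplicativity yields
$$Z(\gamma+y,J;\tau,N,o_N)(R) = \frac{o_N}{o_{N_0}\wedge o_{N_1}}\, Z(\gamma+y, J_0;\tau,N_0,o_{N_0})(R_0)\, Z(\gamma+y, J_1;\tau,N_1,o_{N_1})(R_1),$$
where the $\pm 1$ orientation factor absorbs the sign relating ${\rm Str}(\cdot, S_N, o_N)$ to ${\rm Str}(\cdot, S_{N_0}\otimes S_{N_1}, o_{N_0}\wedge o_{N_1})$. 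On the $N_0$ factor, $\exp(J_0/k)={\rm Id}_{N_0}$, so Proposition \ref{pro:indgammak}, combined with the identity $Z(\gamma+y, J_0;\tau,N_0,o_{N_0})(R_0) = Z(\gamma, J_0;\tau,N_0,o_{N_0})(yJ_0+R_0)$ coming from the first line of Lemma \ref{lem:Zperiodic}, converts it into $\epsilon(J_0/k, N_0)^{\alpha+\beta}\, Z(\tau, N_0, o_{N_0})(yJ_0+R_0)$.

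Next I would expand the left-hand side. Since $J_1$ commutes with $\zeta|_{N_1}$ and $\exp(J_1/k)=\zeta|_{N_1}$, the element $J_1$ is adapted to $\zeta|_{N_1}$, so by definition
$${\rm EM}(\gamma,\zeta;\tau,N_1,o_{N_1})(yJ_1+R_1) = {\rm Os}(J_1/k, o_{N_1})^{\alpha+\beta}\, Z(\gamma, J_1;\tau,N_1,o_{N_1})(yJ_1+R_1),$$
and a second application of Lemma \ref{lem:Zperiodic} rewrites this last $Z$ as $Z(\gamma+y, J_1;\tau,N_1,o_{N_1})(R_1)$. Multiplying by the $N_0$-factor $Z(\tau,N_0,o_{N_0})(yJ_0+R_0)$, the LHS becomes ${\rm Os}(J_1/k,o_{N_1})^{\alpha+\beta}$ times the product $Z(\tau,N_0,o_{N_0})(yJ_0+R_0)\, Z(\gamma+y, J_1;\tau,N_1,o_{N_1})(R_1)$, whereas the RHS, after the reduction above, equals $\epsilon\cdot (o_N/(o_{N_0}\wedge o_{N_1}))\cdot\epsilon(J_0/k,N_0)^{\alpha+\beta}$ times the same product. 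Equating the two and using that the $\pm 1$ factors are their own inverses produces exactly the stated value of $\epsilon$.

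The main obstacle will be the sign bookkeeping: three distinct $\pm 1$ factors enter — ${\rm Os}(J_1/k,o_{N_1})$ from the ${\rm Spin}(N_1)$-lift, $\epsilon(J_0/k,N_0)$ from rotation-number parities on $N_0$, and $o_N/(o_{N_0}\wedge o_{N_1})$ from the comparison of spinor gradings — and these must recombine cleanly on one side of the identity. A minor subtlety worth noting is that $J_0$ may have a kernel in $N_0$; on that kernel $\gamma J_0 = yJ_0 = 0$, so the corresponding $Z$-factor is manifestly independent of $\gamma$ and $y$ and contributes trivially, and Proposition \ref{pro:indgammak} need only be applied to the $J_0$-invertible summand of $N_0$.
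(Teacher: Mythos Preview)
Your argument is correct and follows essentially the same route as the paper: both proofs unwind the definitions, invoke Proposition~\ref{pro:indgammak} on the $N_0$ factor and the identity ${\rm EM}(\gamma,\zeta;\tau,N_1,o_{N_1})={\rm Os}(J_1/k,o_{N_1})^{\alpha+\beta}Z(\gamma,J_1;\tau,N_1,o_{N_1})$ on the $N_1$ factor, and then use the tensor factorizations $S_N=S_{N_0}\otimes S_{N_1}$ and $W_{1,q}(N\otimes_\R\C)=W_{1,q}(N_0\otimes_\R\C)\otimes W_{1,q}(N_1\otimes_\R\C)$ together with multiplicativity of (super)traces. One terminological slip: $J_1$ need \emph{not} be ``adapted'' to $\zeta|_{N_1}$ in the paper's sense (adapted $K$ must act by a single scalar on each $\zeta$-eigenspace, whereas $J_1$ may mix them), so your displayed identity for ${\rm EM}$ should be justified by Theorem~\ref{theo:wantindJ} rather than by the definition for adapted $K$; the conclusion is unchanged.
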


\begin{proof}
Since $\gamma$ is of order $k$, and $\exp(J/k)|_{N_0}={\rm Id}_{N_0}$,
by Proposition \ref{pro:indgammak},
$$Z(\tau,N_0,o_{N_0})(yJ+R)=
\epsilon(J/k,N_0)^{\alpha+\beta}
\frac{{\rm Tr}(e^{\gamma J}e^{yJ+R},W_{1,q}(N_0\otimes_\R \C))}
{{\rm Str}(e^{\gamma J}e^{yJ+R},S_{N_0},o_{N_0})}.$$
By Proposition \ref{theo:wantindJ}
 $${\rm EM}(\gamma,\zeta;\tau,N_1,o_{N_1})(yJ+R)=
 {\rm Os}(J/k,o_{N_1})^{\alpha+\beta}
\frac{{\rm Tr}(e^{\gamma J}e^{yJ+R},W_{1,q}(N_1\otimes_\R \C))}
{{\rm Str}(e^{\gamma J}e^{yJ+R},S_{N_1},o_{N_1})}.$$

By definition $$Z(\gamma+y,J;\tau,N,o_{N})(R)=
\frac{{\rm Tr}(e^{(\gamma+y)J}e^R,W_{1,q}(N\otimes_\R \C))}
{{\rm Str}(e^{(\gamma+y)J}e^R,S_{N},o_{N})}.$$

 As $S_N=S_{N_0}\otimes S_{N_1}$
 and $W_{1,q}(N\otimes_\R \C)=
 W_{1,q}(N_0\otimes_\R \C)\otimes W_{1,q}(N_1\otimes_\R \C),$
 using the multiplicativity of traces and supertraces,
  and the fact that $R$ commutes with $J$,
  we obtain
 the theorem.
 \end{proof}

 Assume now that   the exponential $\hat\zeta=\exp(J/k)$
 in ${\rm Spin}(N)$ is such that $\hat \zeta^k=1$.
 We assume  the infinitesimal action $J$ in $N$ invertible.
 As $\hat \zeta$ projects on $\zeta$,
we see that $\hat \zeta$ belongs to ${\rm Spin}(N_1)\subset {\rm Clifford}(N_1)$,
so defines an orientation on $N_1$.

\begin{theorem}\label{theo:spinequal}
Choose the orientation $o_N$ on $N$ given by $J$,
the orientation on $N_1$ given by $\hat \zeta$, and
the orientation on $N_0$ given by $o_N/o_{\hat \zeta}$.
 Then,  for $R\in \so(N,J)$, and $y\in \C$ sufficiently small,
$$Z(\tau,N_0,o_{N_0})(yJ+R){\rm EM}(\gamma,\zeta;\tau,N_1,o_{N_1})(yJ+R)=
Z(\gamma+y,J;\tau,N,o_{N})(R).$$
\end{theorem}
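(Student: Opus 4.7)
\smallskip\noindent\textbf{Proof plan.} The statement follows at once from the transfer formula in Theorem \ref{th:MainformulaElliptic} provided one can show that, with the orientations prescribed in the hypothesis, the prefactor
$$\epsilon={\rm Os}(J_1/k,o_{N_1})^{\alpha+\beta}\,\epsilon(J_0/k,N_0)^{\alpha+\beta}\,o_N/(o_{N_0}\,o_{N_1})$$
collapses to $1$. The plan is therefore to analyse its three factors one by one.

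The first two factors are essentially tautological given the orientation choices in the statement. By the choice $o_{N_1}=o_{\hat\zeta}$, Definition \ref{def:Ospin} applied to $Y=J_1/k$ gives immediately ${\rm Os}(J_1/k,o_{N_1})=1$, since by construction the orientation induced by $\exp(J_1/k)=\hat\zeta$ in ${\rm Spin}(N_1)$ is exactly $o_{N_1}$. Similarly, setting $o_{N_0}=o_N/o_{\hat\zeta}=o_N/o_{N_1}$ forces $o_{N_0}\wedge o_{N_1}=o_N$, so the ratio $o_N/(o_{N_0}\,o_{N_1})$ is also $1$.

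The main obstacle is the remaining sign $\epsilon(J_0/k,N_0)^{\alpha+\beta}$. Here one exploits the hypothesis that $\hat\zeta$ already belongs to ${\rm Spin}(N_1)\subset{\rm Clifford}(N_1)$. Writing $\hat\zeta=\exp(J/k)=\exp(J_0/k)\exp(J_1/k)$ inside ${\rm Clifford}(N)$, the first factor is a lift of ${\rm Id}_{N_0}\in SO(N_0)$ and hence equals $\pm 1\in{\rm Spin}(N_0)$; the inclusion $\hat\zeta\in{\rm Spin}(N_1)$ selects the sign $+1$, that is $\exp(J_0/k)=1$ in ${\rm Spin}(N_0)$. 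On the other hand, applying the two-dimensional block computation of Section \ref{sec:spinors} to $J_0/k$ (whose rotation numbers are integers, since $\exp(J_0/k)={\rm Id}_{N_0}$ in $SO(N_0)$) expresses this same scalar as $(-1)$ to the sum of the positive rotation numbers of $J_0/k$, which by \eqref{eq:epsiloJ} is $\epsilon(J_0/k,N_0)$. Hence $\epsilon(J_0/k,N_0)=1$ and this factor drops out as well.

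Combining the three steps gives $\epsilon=1$, so the transfer formula of Theorem \ref{th:MainformulaElliptic} reduces to the claimed identity. The only genuinely nontrivial content is the conversion of the Spin-group statement $\hat\zeta\in{\rm Spin}(N_1)$ into the vanishing of the sign $\epsilon(J_0/k,N_0)$; the remaining manipulations are pure bookkeeping of orientations.
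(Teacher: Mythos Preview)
Your overall strategy is right---reduce to Theorem \ref{th:MainformulaElliptic} and show $\epsilon=1$---and the handling of the factor $o_N/(o_{N_0}o_{N_1})$ is correct. But the remaining two factors are treated incorrectly, and the gap is genuine.

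The error is that you tacitly identify $\exp(J_1/k)$ with $\hat\zeta$. In fact, writing $\hat\zeta=\exp(J/k)=\hat\zeta_0\,\hat\zeta_1$ with $\hat\zeta_i=\exp(J_i/k)\in{\rm Spin}(N_i)$, the element $\hat\zeta_0$ lifts ${\rm Id}_{N_0}$ and is therefore $\pm 1$, but \emph{both} signs are consistent with $\hat\zeta\in{\rm Spin}(N_1)$: the scalar $-1$ lies in every ${\rm Spin}$ group, so ``$\hat\zeta\in{\rm Spin}(N_1)$'' does not select the sign $+1$ as you claim. When $\hat\zeta_0=-1$ one has $\hat\zeta_1=-\hat\zeta$, and then the top term of $\sigma(\hat\zeta_1)$ differs in sign from that of $\sigma(\hat\zeta)$, so ${\rm Os}(J_1/k,o_{\hat\zeta})=-1$, not $1$. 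Likewise, the block computation you invoke gives $\exp(J_0/k)=\epsilon(J_0/k,N_0)$ in ${\rm Spin}(N_0)$, hence $\epsilon(J_0/k,N_0)=-1$ in this case, not $1$.

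The correct argument (as in the paper) is a two-case analysis: either $\hat\zeta_1=\hat\zeta$, in which case both ${\rm Os}(J_1/k,o_{N_1})$ and $\epsilon(J_0/k,N_0)$ equal $1$; or $\hat\zeta_1=-\hat\zeta$, in which case both equal $-1$. In either case their product is $1$, so $\epsilon=1$. Your proof collapses this to the first case only; to repair it, you must keep track of the ambiguity $\hat\zeta_1=\pm\hat\zeta$ and verify that the two signs cancel in the product ${\rm Os}(J_1/k,o_{N_1})\,\epsilon(J_0/k,N_0)$ rather than vanishing separately.
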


\begin{proof}
Let $J=J_0\oplus J_1$.
So  $\hat \zeta=\hat\zeta_0 \hat \zeta_1$
with $\hat\zeta_0=\exp(J_0/k)$  and $\hat \zeta_1=\exp(J_1/k).$
We have to verify that
$${\rm Os}(J_1/k,o_{N_1})\epsilon(J_0/k,N_0)=1.$$

We have two cases.

If $\hat \zeta_1=\hat \zeta$, then
 ${\rm Os}(J_1/k,o_{N_1})=1$
 and $\hat \zeta_0=1$.

 In the other case,
  $\hat \zeta_1=-\hat \zeta$, and  ${\hat \zeta}_0=-1$ in ${\rm Spin}(N_0)$.
 All the rotation numbers of $J_0$ are multiples of $k$,
  and in the group ${\rm Spin}(N_0)$
  $$\exp(J_0/k)=\prod_{a\in \Delta^+} (\cos(\pi a/k))=
  (-1)^{\epsilon(J_0/k,N_0)}.$$
So we see that in both cases
 ${\rm Os}(J_1/k,o_{N_1})\epsilon(J_0/k,N_0)=1.$
\end{proof}

In the spin case, we also have the following periodicity properties.

\begin{lemma}
Let $J\in \so(N)$. Assume that, in the group ${\rm Spin}(N)$, we have
$\exp(J)=1$.
Then
$$Z(\gamma+1;\tau,N,o_N)(R)=Z(\gamma;\tau,N,o_N)(R),$$
$$Z(\gamma+\tau;\tau,N,o_N)(R)=Z(\gamma;\tau,N,o_N)(R).$$

\end{lemma}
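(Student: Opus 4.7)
The plan is to reduce the lemma directly to Lemma~\ref{lem:Zperiodic}, which asserts
$$Z(\gamma+1,J;\tau,N,o_N)(R) = \epsilon(J,N)\,Z(\gamma,J;\tau,N,o_N)(R)$$
and the analogous identity for $\gamma+\tau$, with $\epsilon(J,N)=(-1)^{\rho(J,\Delta^+)}$ by \eqref{eq:epsiloJ}. So the only substantive task left is to show that the \emph{stronger} hypothesis $\exp(J)=1$ in ${\rm Spin}(N)$ (not merely in $SO(N)$) forces $\epsilon(J,N)=+1$.

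For this I would decompose $N\otimes_\R \C = \oplus_{a\in \Delta}N_a$ with $Jv=2i\pi a\,v$ on $N_a$, and pair up the eigenspaces as $\Delta = \Delta^+\cup \Delta^-$. Pushing $\exp(J)=1$ through the covering ${\rm Spin}(N)\to SO(N)$ first shows that every rotation number $a$ is an integer; the remaining computation then takes place inside ${\rm Spin}(N)\subset {\rm Clifford}(N)$. On each $J$-invariant real $2$-plane carrying rotation number $a\in\Z$, the formula recalled in Section~\ref{sec:spinors} gives $\exp(J|_{\text{plane}}) = \cos(\pi a)+\sin(\pi a)\,e_1 e_2 = (-1)^a$. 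By multiplicativity of the exponential on the orthogonal $J$-stable decomposition and the tensor-product structure of ${\rm Clifford}(N)$, the global exponential is the product of the planar ones, giving
$$\exp(J) = \prod_{a\in \Delta^+}(-1)^a = (-1)^{\rho(J,\Delta^+)}\quad\text{in }{\rm Spin}(N).$$
The hypothesis $\exp(J)=1$ therefore yields $(-1)^{\rho(J,\Delta^+)}=1$, i.e.\ $\epsilon(J,N)=1$, and both periodicities drop out.

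The only subtlety is that the sign $(-1)^{\rho(J,\Delta^+)}$ should not depend on the choice of $\Delta^+$, but this independence is already built into \eqref{eq:epsiloJ}. I do not anticipate any genuine obstacle: the whole argument is just a translation of the group-theoretic constraint in ${\rm Spin}(N)$ into a parity statement on the rotation numbers via the explicit two-dimensional model.
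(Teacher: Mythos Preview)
Your proposal is correct and follows essentially the same route as the paper: reduce to Lemma~\ref{lem:Zperiodic} and then verify $\epsilon(J,N)=1$ from the hypothesis $\exp(J)=1$ in ${\rm Spin}(N)$. Your justification for the latter, via the explicit two-dimensional Clifford computation $\exp(J)=\prod_{a\in\Delta^+}(-1)^a=(-1)^{\rho(J,\Delta^+)}$, is in fact more transparent than the paper's terse appeal to the vanishing of the supertrace of $\exp(J)$ on $S_N$.
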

\begin{proof}
Use Lemma \ref{lem:Zperiodic}. We have $\rho(J,\Delta^+)$ even,
since
the supertrace of $\exp(J)$ in $S_N$ is equal to $0$. So
$\epsilon(J,N)=(-1)^{\rho(J,\Delta^+)}=1$.
\end{proof}

Let $\gamma\in \C$ of order $k$ in $E_\tau$.
Let $\hat \zeta\in {\rm Spin}(N)$ such that ${\hat \zeta}^k=1$,
and orient $N$ by $\hat \zeta$.
Since  $v(\zeta,k;N,o_N)=1$, we obtain the following identities of functions on  $\so(N,\zeta)$.
\begin{proposition}\label{lem:gammaplusonespin}
$${\rm EM}(\gamma+1,\zeta;\tau,N,o_{N})={\rm EM}(\gamma,\zeta;\tau,N,o_N).$$
$${\rm EM}(\gamma+\tau,\zeta;\tau,N,o_{N})={\rm EM}(\gamma,\zeta;\tau,N,o_N).$$
\end{proposition}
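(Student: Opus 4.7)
The plan is to deduce this proposition as an immediate corollary of Lemma~\ref{lem:gammaplusone}, once we identify the scalar $v(\zeta,k;N,o_N)$ under the spin hypothesis.

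Recall from the definition of $v(\zeta,k;N,o_N)$ that, given an orientation $o_N$ on $N$, one lifts $\zeta$ to the unique element $\hat\zeta \in {\rm Spin}(N)$ whose associated orientation on $N$ coincides with $o_N$, and then $v(\zeta,k;N,o_N)=\hat\zeta^k \in \{\pm 1\}\subset {\rm Clifford}(N)$. Here we are in the spin situation of the immediately preceding paragraph: we assume a lift $\hat\zeta \in {\rm Spin}(N)$ is given with $\hat\zeta^k = 1$, and we orient $N$ precisely by $\hat\zeta$. With this orientation the unique lift prescribed by the definition of $v$ is exactly our $\hat\zeta$, so
\[
v(\zeta,k;N,o_N)=\hat\zeta^k = 1.
\]

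Now apply Lemma~\ref{lem:gammaplusone}, which states
\[
{\rm EM}(\gamma+1,\zeta;\tau,N,o_N)=v(\zeta,k;N,o_N)\,{\rm EM}(\gamma,\zeta;\tau,N,o_N),
\]
\[
{\rm EM}(\gamma+\tau,\zeta;\tau,N,o_N)=v(\zeta,k;N,o_N)\,{\rm EM}(\gamma,\zeta;\tau,N,o_N).
\]
Substituting $v(\zeta,k;N,o_N)=1$ gives both asserted equalities as identities of analytic functions on $\so(N,\zeta)$.

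There is really no obstacle here: the entire content is packaged in Lemma~\ref{lem:gammaplusone} together with the definition of $v$, and the spin hypothesis is precisely designed to make the orientation defect disappear. The only point worth emphasizing in writing the proof is that the orientation used to define $v$ is the one coming from $\hat\zeta$ itself, so that the lift appearing in the definition of $v$ agrees with the given $\hat\zeta$ and one does not pick up an unexpected sign.
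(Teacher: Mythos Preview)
Your proof is correct and follows exactly the paper's own argument: the paper states just before the proposition that ``Since $v(\zeta,k;N,o_N)=1$, we obtain the following identities,'' which is precisely your application of Lemma~\ref{lem:gammaplusone} after observing that the orientation $o_N$ is chosen to be the one coming from $\hat\zeta$ with $\hat\zeta^k=1$.
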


So we have accomplished our goal:

A) To $(N,o_N)$ an oriented  even dimensional  Euclidean vector space, $J\in \so(N)$ invertible and
such that $\exp(J)=1$ in ${\rm Spin}(N)$, and $\gamma\in \C$ such that $a\gamma$ is not in $L_\tau$
for all rotation number of $J$,  we have constructed an invariant function
$Z(\gamma,J;\tau,N,o_N)$  on $\so(N,J)$,
depending only of the image of $\gamma$ in $E_\tau$.

A) To $(N,o_N)$ an oriented even dimensional Euclidean vector space,
 $\gamma\in \C$  of order $k$ in $E_\tau$,
 $\zeta\in {\rm Spin}(N)$,
such that $\zeta^k=1$ in ${\rm Spin}(N)$ and such that $({\rm Id}_N-\zeta)$ is invertible,
 we have constructed an invariant function
$EM(\gamma,\zeta; \tau,N,o_\zeta)$  on $\so(N,\zeta)$,
depending only of the image of $\gamma$ in $E_\tau$.

\section{Equivariant characteristic classes and Integration on elliptic cohomology}\label{mugamma}
We now apply the Chern Weil homomorphism to define the needed equivariant forms
$\mu_\gamma$ for $\tau$-integration.

\begin{definition}
Let $M$ be a compact oriented manifold with an action of $G=S^1$.
Consider $\CN\to M$  an equivariant real vector bundle with typical fiber $N$ .
We denote by
 ${\rm C_q}(\CN)(z)\in H_{[G]}(M)$ the characteristic class associated
by the Chern Weil homomorphism to the function
$C_1(\tau, N)(R)={\rm Tr}(e^R, W_{1,q}(N\otimes_\R \C))$.

If $\CN=TM$, we denote the class by ${\rm C_q}(M)(z)$.

\end{definition}

If $G$ acts trivially on $M$, then
${\rm C_q}( M)(z)={\rm C_q}( M)(0)$
is independent of $z$ and is just the usual Chern character
of $W_{1,q}(TM\otimes_\R \C)$.

\bigskip

Let $\CN\to M$ be a $G$-equivariant even dimensional Euclidean vector bundle over $M$ with orientation
$o_\CN$.  Assume that $G$ acts trivially on $M$
and that $J_G$ produces a vertical invertible transformation $J$
of $\CN$.
Let $\gamma\in \C$ be such that $a\gamma$ is not in $L_\tau$
for all rotation numbers of $J$.

\begin{definition}
We denote by ${\rm cw}Z(\gamma,J;\tau,\CN,o_\CN)(z)\in H_{[G]}(M)$
the equivariant characteristic class associated by the Chern-Weil homomorphism
to the function $Z(\gamma,J;\tau,N,o_N)$ on $\so(N,J)$,
that is
$${\rm cw}Z(\gamma,J;\tau,\CN,o_\CN)(z)=
\frac{{\rm Tr}( e^{\gamma J+{\mathcal R}(z)},
W_{1,q}(\CN\otimes_\R \C))}
{{\rm Str}( e^{\gamma J+{\mathcal R}(z)},S_\CN,o_\CN)}.$$
\end{definition}

The equivariant form ${\rm cw}Z(\gamma,J;\tau,\CN,o_\CN)(z)$
depends only of the image of $\gamma$ in $E_\tau$.

\begin{definition}
Let $\CN\to M$ be a $G$-equivariant even dimensional Euclidean vector bundle over $M$ with orientation
$o_\CN$.
Let $k$ be an integer,  $\zeta\in SO(\CN)$ a vertical action on $\CN$
such that
$\zeta^k={\rm Id}$, commuting with the $G$-action and such that ${\rm Id}-\zeta$ is invertible.
Let $\gamma\in \C$ of order $k$ in $E_\tau$.
We denote by ${\rm cw}EM(\gamma,\zeta;\tau,\CN,o_\CN)(z)$
the equivariant characteristic class associated by the Chern-Weil homomorphism
to the function $EM(\gamma,\zeta;\tau, N,o_N)$ on $\so(N,\zeta)$.
\end{definition}

Here again the equivariant form ${\rm cw}EM(\gamma,\zeta;\tau,\CN,o_\CN)(z)$
depends only of the image of $\gamma$ in $E_\tau$.

\bigskip

We are now ready to define the needed characteristic classes for $\tau$-integration.

Recall that if  $\gamma\in E_\tau$ is not a special point,
then $M_\gamma=M^G$.
If $\gamma\in E_\tau$ is a special point of order $k$,
then $M_\gamma=M(\exp(J_G/k))$.
Since $M$ is even dimensional, oriented, and with invariant spin structure, manifolds $M_\gamma$ are even dimensional and oriented.

We start by the case where $\gamma$ is not a special point.
So $M_\gamma=M^G$.
Let $\CN$ be the normal bundle  of $M^G$ in $M$.
Then $J_G$ acts vertically, provides a (locally constant)
invertible transformation $J\in \so(\CN_m)$ of the fiber $\CN_m$,
thus an orientation $o_\CN$. Furthermore we have $a\gamma$ not in $L_\tau$ for
all rotation numbers of $J$ since $\gamma$ is not a special point.
We orient $M_\gamma$ by the quotient orientation of $M$ and of $\CN$.

\begin{definition}\label{def:nuJ}
We define
$$\mu_\gamma(z)=(-2i\pi)^{-\dim M_\gamma/2}
{\hat A}(M_\gamma)(z){\rm C_q}( M_\gamma)(z)
{\rm cw}Z(\gamma,J;\tau,\CN,o_\CN)(z).$$
Here $\CN=TM/TM_\gamma$ is the normal bundle of $M_\gamma$ in $M$.

\end{definition}

In fact since $J_G$ acts trivially on $M_\gamma$,
${\hat A}(M_\gamma)(z)={\hat A}(M_\gamma)(0)$ is the usual $\hat A$ class of $M^G$
and does not depend of $z$
and ${\rm C_q}(M_\gamma)(z)={\rm C_q}( M_\gamma)(0)$ is
the usual Chern character of the bundle $W_{1,q}(TM_\gamma\otimes_\R \C)$.

\bigskip
Consider now the case where $\gamma$ is a special point of order $k$. So $M_\gamma=M(\exp(J_G/k))$.
Let $\CN\to M_\gamma$ be the normal bundle of $M_\gamma$ in $M$.
Denote  by $\zeta$   the action of $\exp(J_G/k)$ on $\CN$ (the action is vertical).
Denote  by $\hat \zeta$   the element of ${\rm Spin}(\CN)$ above $\zeta$
provided by the spin structure.
We choose the orientation $o_{\CN}$ on $\CN$ associated   to $\hat \zeta$.
We orient $M_\gamma$  by the quotient orientation of $M$ and of $\CN$.

\begin{definition}\label{def:nuzeta}
We define
$$\mu_\gamma(z)=(-2i\pi)^{\dim M_\gamma/2}
{\hat A}(M_\gamma)(z){\rm C_q}( M_\gamma)(z)
 {\rm   cw EM}(\gamma,\zeta;\tau,\CN,o_\CN)(z).$$

 Here $\CN=TM/TM_\gamma$ is the normal bundle of $M_\gamma$ in $M$.
\end{definition}

Definitions \ref{def:nuJ} and \ref{def:nuzeta}
are very similar.
However, in the non special case, the normal bundle $\CN$ is provided with a vertical action
of $S^1$, while in the special case, we have only a vertical action of $\Z/k\Z$.
The delicate definition of $\mu_\gamma(z)$ for special element
$\gamma$  is already present in Bott-Taubes, where they construct a modified Witten series of vector bundles on $M_\gamma$  using the bundles
$W_{i,q}(V)$ for $i\in \{1,2,3,4\}$.
Here we have constructed explicitly  $\mu_\gamma(z)$
as an holomorphic  germ of equivariant form on $M_\gamma$
 (giving the Berline-Vergne index formulae for the index of Bott-Taubes series at $z=0$.)
Furthermore (if we are not mistaken) by our careful study of orientations,
 compatible fixed point formulae are automatic and the classes depend only
 of the image of $\gamma$ in $E_\tau$.

\begin{theorem}(Rosu)
Let ${\bf b}=(b_\gamma)$ be an elliptic bouquet
 defined over $U\subset E_\tau$.
Then there exists a holomorphic function $I(c)$ defined on $U$ and such that
$$\int_{M_\gamma}b_\gamma(z) \mu_\gamma(z)=I(c)(\gamma+z)$$
for any $\gamma\in U$ and $z$ sufficiently small.
\end{theorem}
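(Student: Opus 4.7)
The argument parallels the one used for $\CSK_G(M)$ in Theorem \ref{theo:Kint}: I will define local integrals $I_\gamma$ and use the BV localization formula to verify the consistency relation, with the transfer formula (Theorem \ref{theo:spinequal}) playing the role that Proposition \ref{pro:MAINformulaK} played in the $K$-theoretic case. For $\gamma\in U$, set
$$I_\gamma(\gamma+z)=\int_{M_\gamma}b_\gamma(z)\,\mu_\gamma(z),$$
a germ of holomorphic function at $\gamma$. Since the special points form a finite set, it suffices to show that for every $\gamma\in U$ and every $y\in\C$ nonzero and sufficiently small, with $\gamma+y$ non-special and $\gamma+y\in U$, one has
$$I_\gamma(\gamma+y+z)=I_{\gamma+y}((\gamma+y)+z)$$
as an equality of germs at $z=0$. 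A standard sheaf-theoretic argument (identical to the end of Theorem \ref{theo:Kint}) then produces the desired holomorphic function $I$ on $U$.

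Since $\gamma+y$ is non-special, $M_{\gamma+y}=M^G$, and the integrand for $I_{\gamma+y}$ lives on $M^G$ by Definition \ref{def:nuJ}. To bring $I_\gamma$ onto $M^G$ as well, I apply the BV localization formula on the manifold $M_\gamma$ with respect to $yJ_G$: the zero set of $J_G$ in $M_\gamma$ is exactly $M^G$, with normal bundle $\CN_0:=T M_\gamma/TM^G$ oriented by the invertible action of $J_G$ on its fibers. This expresses $I_\gamma(\gamma+y+z)$ as an integral over $M^G$ of
$$(-2\pi)^{-\dim\CN_0/2}\,\frac{b_\gamma(y+z)|_{M^G}\,\mu_\gamma(y+z)|_{M^G}}{\mathrm{Eul}(\CN_0,o_Y)(y+z)}.$$
By the defining sheaf property of $\mathcal Ell_G(M,\tau)$, $b_\gamma(y+z)|_{M^G}=b_{\gamma+y}(z)$. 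The normal bundle of $M^G$ in $M$ decomposes as $\tilde\CN=\CN_0\oplus\CN_1|_{M^G}$, where $\CN_1=TM/TM_\gamma$ carries the vertical action $\zeta=\exp(J_G/k)$ with $\mathrm{Id}-\zeta$ invertible, and the spin structure on $M$ picks out the orientations $o_{\CN_0}$, $o_{\CN_1}$ dictated by Theorem \ref{theo:spinequal}.

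It remains an identity among equivariant characteristic classes on $M^G$. Restricting $\hat A(M_\gamma)(y+z)$ and $\mathrm{C_q}(M_\gamma)(y+z)$ to $M^G$ gives the expected multiplicative factorizations with the $\CN_0$-contributions, and Formula \eqref{eq:eqjLchi} applied via Chern--Weil rewrites
$$\frac{(-i)^{-\dim\CN_0/2}\,\hat A(\CN_0)(y+z)\,\mathrm{C_q}(\CN_0)(y+z)}{\mathrm{Eul}(\CN_0,o_Y)(y+z)}=\mathrm{cw}Z(\tau,\CN_0,o_{\CN_0})(y+z).$$
Applying the Chern--Weil homomorphism to Theorem \ref{theo:spinequal} fiberwise (which is legitimate because $\CN_0$ has structure group reduced to $SO(N_0,J)$ and $\CN_1$ to the stabilizer of both $J$ and $\zeta$, while $\mathcal R(yJ_G+z)|_{M^G}=\CL(yJ_G)+\mathcal R(z)$ by Formula \eqref{eq:RS}) then collapses the product
$$\mathrm{cw}Z(\tau,\CN_0,o_{\CN_0})(y+z)\cdot\mathrm{cw}EM(\gamma,\zeta;\tau,\CN_1,o_{\CN_1})(y+z)$$
into $\mathrm{cw}Z(\gamma+y,J;\tau,\tilde\CN,o_{\tilde\CN})(z)$, with no residual orientation sign because Theorem \ref{theo:spinequal} is designed precisely to make $\epsilon=1$ in the spin case. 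Bundling the remaining $\hat A(M^G)(z)$ and $\mathrm{C_q}(M^G)(z)$ factors reproduces $b_{\gamma+y}(z)\mu_{\gamma+y}(z)$ by Definition \ref{def:nuJ}.

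The substantive obstacle is orientation bookkeeping: matching the $(-i)^{\dim\CN_0/2}$, $(-2\pi)^{-\dim\CN_0/2}$, and $(-2i\pi)^{\pm\dim M_\bullet/2}$ prefactors coming respectively from Formula \eqref{eq:eqjLchi}, the BV formula, and Definitions \ref{def:nuJ}--\ref{def:nuzeta}, while checking that the orientation $o_{\tilde\CN}$ induced on the total normal bundle of $M^G$ in $M$ (for the non-special point $\gamma+y$, determined by the locally constant action of $J_G$) agrees with $o_{\CN_0}\wedge o_{\CN_1}$ as arranged in Theorem \ref{theo:spinequal}. Once this is tracked---this is exactly the point of Section 3 and of Remark \ref{rem:orgY}---the equality $I_\gamma(\gamma+y+z)=I_{\gamma+y}((\gamma+y)+z)$ reduces to the algebraic identity of Theorem \ref{theo:spinequal}, proving the theorem.
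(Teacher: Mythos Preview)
Your proof is correct and follows essentially the same route as the paper's: BV localization on $M_\gamma$ to pass to $M^G$, multiplicative factorization of $\hat A$ and ${\rm C_q}$ along $TM|_{M^G}={\mathcal T}_0\oplus\CN_0\oplus\CN_1$, the Chern--Weil translation of \eqref{eq:eqjLchi} to rewrite the $\CN_0$-contribution as ${\rm cw}Z$, and then Theorem \ref{theo:spinequal} to collapse the product. The only cosmetic difference is that the paper treats the non-special $\gamma$ case separately (there $M_\gamma=M^G$, $\CN_0=0$, and $\mu_\gamma(y+z)=\mu_{\gamma+y}(z)$ is immediate from $\CR(y+z)=yJ+\CR(z)$), whereas your write-up, by invoking $\zeta=\exp(J_G/k)$, is implicitly addressing only the special case; this is harmless since the non-special case is the degenerate instance of your argument.
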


 \begin{proof}
The proof follows the scheme of the proof of  Theorem \ref{theo:Kint}.
If  the special points contained in $U$ are all reals (that is $\gamma \in \R$),
the theorem above follows
from the consistency of bouquet $K$-integration  of sections of
$\CSK_G(M)$.
This is the starting point of the proof of Liu  \cite{Liu} of the rigidity theorem.

So let $\gamma\in \C$.
Let $$I_\gamma(z)=\int_{M_\gamma}b_\gamma(z) \mu_\gamma(z).$$
We need to prove that, for $y$ small,
$I_\gamma(y+z)=I_{\gamma+y}(z)$
 as an identity of germs at $z=0$.

If $\gamma$ is not special, from the definition of an elliptic bouquet,
 it is sufficient to prove
that, for $y$ small, we have the identity
$\mu_\gamma(y+z)=\mu_{\gamma+y}(z)$ as an identity in $H_{[G]}(M^G)$.
But this is clear:
in fact ${\hat A}(M_\gamma)(z){\rm C_q}( M_\gamma)(y+z)$
is independent of $y$ and $z$,
and $${\rm cw}Z(\gamma,J;\tau,\CN,o_\CN)(y+z)
=\frac{{\rm Tr}( e^{\gamma J} e^{\CR(y+z)},W_{1,q}(\CN\otimes_\R\C))}
{{\rm Str}( e^{\gamma J} e^{\CR(y+z)},S_\CN,o_\CN)}$$
is equal to
$$\frac{{\rm Tr}(e^{(\gamma+y) J} e^{\CR(z)},W_{1,q}(\CN\otimes_\R\C))}
{{\rm Str}( e^{(\gamma+y) J} e^{\CR(z)},S_\CN,o_\CN)}$$
since $J_G$ acts trivially on $M^G$, using again \ref{eq:RS}.

The only delicate part of the proof is
when  $\gamma$ is special of order $k$.
Let $\zeta=\exp(J_G/k)$,
then  $M_\gamma=M(\zeta)$, and for $y\neq 0$ small,
$M_{\gamma+y}=M^G$.

Let $\CN_0$ be the normal bundle of $M^G$ in $M_\gamma$.
Let $\CN_1\to M_\gamma$  be the normal bundle of $M_\gamma$ in $M$ oriented by
$\hat \zeta$,
and $\CN\to M^G$ be the normal bundle of $M^G$ in $M$ oriented by $J$.
Let $o_{\CN_0}$ the quotient orientation $o_{\CN}/o_{\CN_1}$.
This is also the orientation of the orientation we have defined on $TM_\gamma$ and $TM^G$.

 Decompose
$$TM|_{M^G}={\mathcal T}_0\oplus \CN_{0}\oplus \CN_1|_{M^G},$$
where ${\mathcal T}_0$ it the tangent bundle $T(M^G)$.

From the localization formula and the definition of the sheaf
${\mathcal Ell}_G(M,\tau)$,
it is sufficient to verify
that
$$(-2\pi)^{rk \CN_0/2}\mu_\gamma(y+z)|_{M^G}
{\rm Eul}^{-1}(\CN_0,o_{\CN_0})(y+z)=\mu_{\gamma+y}(z)$$
as an identity in $H_{[G]}(M^G)$.
So it is sufficient to prove that
$$\left({\hat A}(M_\gamma)(y+z)  {\rm C_q}( M_\gamma)(y+z)\right)|_{M^G}
{\rm EM}(\gamma,\zeta;\tau, \CN_1,o_\CN)(y+z)
{\rm Eul}^{-1}(\CN_0,o_{\CN_0})(y+z)$$
is equal to
$$(-i)^{\dim \CN_0/2}{\hat A}(M^G)(0)
{\rm C_q}( M^G)(0)
{\rm EM}(\gamma+y,\zeta;\tau, \CN,o_\CN)(z).$$

We use  $${\hat A}(M_\gamma)(y+z)|_{M^G}=
{\hat A}(M^G)(0){\hat A}(\CN_0)(y+z)$$
$${\rm C_q}( M_\gamma)(y+z)|_{M^G}=
{\rm C_q(M^G)(0)} {\rm C_q}(\CN_0)(y+z)$$
and $${\hat A}(\CN_0)(y+z){\rm C_q}(\CN_0)(y+z){\rm Eul}^{-1}(\CN_0,o_{\CN_0})(y+z)=
{\rm cwZ}(y,J;\tau,\CN_0,o_{\CN_0})(z)$$
(deduced from Equation \ref{eq:eqjLchi} by the Chern-Weil homomorphism).

So factoring ${\hat A}(M^G)(0){\rm C_q}(M^G)(0)$,
it is sufficient to prove
that
$${\rm cwZ}(\gamma,J;\tau,\CN_0,o_{\CN_0})(y+z)
{\rm cw EM}(\gamma,\zeta;\tau, \CN_1,o_{\CN_1})(y+z)$$
 is equal to
 $${\rm cwZ}(\gamma+y,J;\tau,\CN,o_{\CN})(z).$$

The various equivariant curvatures $\CR(y+z)$ restricted to $M_G$ are
$yJ+\CR(z)$.

So  the equivariant form
 $$z\mapsto {\rm cwZ}(\gamma,J;\tau,\CN_0,o_{\CN_0})(y+z)$$
 is the image by the Chern Weil isomorphism
  of the invariant function
  $$R\mapsto Z(\gamma,J;\tau,N_0,o_{N_0})(yJ+R)$$
  on $\so(N,J)$.

 The equivariant form
 $$z\mapsto {\rm cw EM}(\gamma,\zeta;\tau, \CN_1,o_{\CN_1})(y+z)$$
 is the image by the Chern Weil homomorphism of the invariant function
 $$R\mapsto  EM(\gamma,\zeta;\tau, N_1,o_{N_1})(yJ+R)$$
 on $\so(N,J)$.

  The equivariant form
 $$z\mapsto {\rm cwZ}(\gamma+y,J;\tau,\CN,o_{\CN})(z)$$
 is the image by the Chern Weil homomorphism of the invariant function
 $$R\mapsto  Z(\gamma+y,J;\tau, N,o_{N})(R)$$
 on $\so(N,J)$.

 So the result follows from Theorem \ref{theo:spinequal}.

\end{proof}

\bigskip

Consider the global section ${\bf w}=(w_\gamma)$
with $w_\gamma=1$
for all $\gamma\in E_\tau$.
Then $I({\bf w})$ is a global function on $E_\tau$, so $I({\bf w})$ is constant.
The value of $I({\bf w})$ at $\gamma=0$
is exactly
$$\int_M {\hat A}(M)(z) {\rm Ch}(W_{1,q}(TM\otimes_\R \C))(z)$$
So we obtain that this integral is independent of $z$.
This is Witten rigidity theorem.

%%=============================================================================
%\section*{Acknowledgements}
%%=============================================================================
%
%Discussions with Eric Vasserot were useful to understand the similarities
%of the different  cohomological theories, and this is the point that I want to emphasize in this text.
% I am also  thankful to  Daniel Berwick-Evans and  Michel Duflo for correcting earlier versions. The possible remaining mistakes are mine.
%
%

\begin{bibdiv}
\begin{biblist}

\bib{BGV}{book}{
   author={Berline, Nicole},
   author={Getzler, Ezra},
   author={Vergne, Mich\`ele},

   title={Heat kernels and Dirac operators},
   series={Grundlehren der Mathematischen Wissenschaften [Fundamental
   Principles of Mathematical Sciences]},
   volume={298},
   publisher={Springer-Verlag, Berlin},
   date={1992},
   pages={viii+369},
   %isbn={3-540-53340-0},
   %review={\MR{1215720}},
   %doi={10.1007/978-3-642-58088-8},
}

\bib{BV82}{article}{
   author={Berline, Nicole},
   author={Vergne, Mich\`ele},
   title={Classes caract\'{e}ristiques \'{e}quivariantes. Formule de localisation en
   cohomologie \'{e}quivariante},
   language={French, with English summary},
   journal={C. R. Acad. Sci. Paris S\'{e}r. I Math.},
   volume={295},
   date={1982},
   number={9},
   pages={539--541},
   %issn={0249-6321},
   %
   %review={\MR{685019}},
}

\bib{ber-ver85}{article}{
   author={Berline, Nicole},
   author={Vergne, Mich\`ele},
   title={The equivariant index and Kirillov's character formula},
   journal={Amer. J. Math.},
   volume={107},
   date={1985},
   number={5},
   pages={1159--1190},
   %issn={0002-9327},
   %review={\MR{805808}},
   %doi={10.2307/2374350},
}

\bib{bott-taubes
}{article}{
   author={Bott, Raoul},
   author={Taubes, Clifford},
   title={On the rigidity theorems of Witten},
   journal={J. Amer. Math. Soc.},
   volume={2},
   date={1989},
   number={1},
   pages={137--186},
   %issn={0894-0347},
   %review={\MR{954493}},
   %doi={10.2307/1990915},
}
\bib{ChenHan}{article}{
   author={Chen, Xiaoyang},
   author={Han, Fei},
   title={A vanishing theorem for elliptic genera under a Ricci curvature bound},
   journal={ArXiv},
   number={2003.09897v1},
   date={2020}
   %issn={0303-1179},
   %review={\MR{1247060}},
}

\bib{duflo-vergne}{article}{
   author={Duflo, Michel},
   author={Vergne, Mich\`ele},
   title={Cohomologie \'{e}quivariante et descente},
   language={French},
   note={Sur la cohomologie \'{e}quivariante des vari\'{e}t\'{e}s diff\'{e}rentiables},
   journal={Ast\'{e}risque},
   number={215},
   date={1993},
   pages={5--108},
   %issn={0303-1179},
   %review={\MR{1247060}},
}

\bib{groj}{article}{
   author={Grojnowski, I.},
   title={Delocalised equivariant elliptic cohomology},
   conference={
      title={Elliptic cohomology},
   },
   book={
      series={London Math. Soc. Lecture Note Ser.},
      volume={342},
      publisher={Cambridge Univ. Press, Cambridge},
   },
   date={2007},
   pages={114--121},
   %review={\MR{2330510}},
   %doi={10.1017/CBO9780511721489.007},
}

\bib{Liu}{article}{
   author={Liu, Kefeng},
   title={On modular invariance and rigidity theorems},
   journal={J. Differential Geom.},
   volume={41},
   date={1995},
   number={2},
   pages={343--396},
   %issn={0022-040X},
   %review={\MR{1331972}},
}

\bib{LiuMa}{article}{
   author={Liu, Kefeng},
   author={Ma, Xiaonan},
   title={On family rigidity theorems. I},
   journal={Duke Math. J.},
   volume={102},
   date={2000},
   number={3},
   pages={451--474},
   %issn={0012-7094},
   %review={\MR{1756105}},
   %doi={10.1215/S0012-7094-00-10234-7},
}

\bib{LiuMaZhang}{article}{
   author={Liu, Kefeng},
   author={Ma, Xiaonan},
   author={Zhang, Weiping},
   title={Rigidity and vanishing theorems in $K$-theory},
   journal={Comm. Anal. Geom.},
   volume={11},
   date={2003},
   number={1},
   pages={121--180},
   %issn={1019-8385},
   %review={\MR{2016198}},
   %doi={10.4310/CAG.2003.v11.n1.a6},
}

\bib{rosu
}{article}{
   author={Rosu, Ioanid},
   title={Equivariant elliptic cohomology and rigidity},
   journal={Amer. J. Math.},
   volume={123},
   date={2001},
   number={4},
   pages={647--677},
   %issn={0002-9327},
   %review={\MR{1844573}},
}

\bib{vergneecm}{article}{
   author={Vergne, Mich\`ele},
   title={Geometric quantization and equivariant cohomology},
   conference={
      title={First European Congress of Mathematics, Vol. I},
      address={Paris},
      date={1992},
   },
   book={
      series={Progr. Math.},
      volume={119},
      publisher={Birkh\"{a}user, Basel},
   },
   date={1994},
   pages={249--295},
   review={\MR{1341826}},
}

\bib{witten}{article}{
   author={Witten, Edward},
   title={The index of the Dirac operator in loop space},
   conference={
      title={Elliptic curves and modular forms in algebraic topology},
      address={Princeton, NJ},
      date={1986},
   },
   book={
      series={Lecture Notes in Math.},
      volume={1326},
      publisher={Springer, Berlin},
   },
   date={1988},
   pages={161--181},
   %review={\MR{970288}},
   %doi={10.1007/BFb0078045},
}
		
\end{biblist}
\end{bibdiv}
\end{document}